\def\avint{\mathop{\,\rlap{--}\hspace{-0.175cm}\int}\nolimits}
\numberwithin{equation}{section}
\DeclareMathOperator{\Div}{div}
\DeclareMathOperator{\curl}{curl}
\newenvironment{claim}[1]{\par\noindent{\it Claim:}\space#1}{}
\newcommand{\N}{\mathbb N}
\newcommand{\Lvec}{\mathbf L}
\newcommand{\hvec}{\mathbf h}
\newcommand{\Hvec}{\mathbf H}
\newcommand{\Vvec}{\mathbf V}
\newcommand{\uvec}{\mathbf u}
\newcommand{\eps}{\epsilon}
\newcommand{\yvec}{\mathbf y}
\newcommand{\wvec}{\mathbf w}
\newcommand{\be}{\begin{equation}}
\newcommand{\ee}{\end{equation}}
\newcommand{\ba}{\begin{eqnarray}}
\newcommand{\ea}{\end{eqnarray}}
\newcommand{\Om}{\Omega}
\newcommand{\om}{\omega}
\def\dis{\displaystyle}
\newcommand{\mat}[1]{\mbox{\boldmath{$#1$}}}
\newcommand{\zvec}{\mathbf{z}}
\newcommand{\fvec}{\mathbf{f}}
\newcommand{\evec}{\mathbf{e}}
\newcommand{\vvec}{\mathbf{v}}
\newcommand{\ovec}{\mathbf{0}}
\newcommand{\xvec}{\mathbf{x}}
\newcommand{\Avec}{\mathbf{A}}
\newtheorem{theorem}{Theorem}[section]
\newtheorem{proposition}[theorem]{Proposition}
\newtheorem{remark}[theorem]{Remark}
\newtheorem{lemma}[theorem]{Lemma}
\begin{document}


\title [observability estimate and applications]{Observability inequalities on measurable sets for the Stokes system and applications}


\author{\textsc{Felipe W. Chaves-Silva}}\thanks{Department of Mathematics, Federal University of Pernambuco, CEP 50740-545, Recife,
PE, Brazil. E-mail: {\tt fchaves@dmat.ufpe.br}. F. W. Chaves-Silva  was supported ERC Project No. 320845: Semi Classical Analysis of Partial Differential
Equations, ERC-2012-ADG.
}. 
\author{\textsc{Diego A. Souza}}\thanks{Department of Mathematics, Federal University of Pernambuco, CEP 50740-545, Recife,
PE, Brazil. E-mail: {\tt diego.souza@dmat.ufpe.br}. D. A. Souza was supported
	by the ERC advanced grant 668998 (OCLOC) under the EU's H2020 research program.}

\author{\textsc{Can Zhang}}
\thanks{School of Mathematics and Statistics, Wuhan University, 430072 Wuhan, China; Sorbonne Universit\'es, UPMC Univ Paris 06, CNRS UMR 7598, Laboratoire Jacques-Louis Lions, F-75005 Paris, France. E-mail: {\tt zhangcansx@163.com.}}

\maketitle


\begin{abstract}
	In this paper, we establish spectral inequalities on measurable sets of positive  Lebesgue measure for 
	the Stokes operator, as well as an observability inequalities on space-time measurable sets of 
	positive measure for non-stationary Stokes system. Furthermore, we provide their applications in 
	the theory of shape optimization and time optimal control problems.
	  
\bigskip

\noindent
\textbf{Keywords\,:}  spectral inequality, observability inequality, Stokes equations, shape optimization problems,
	time optimal control problem.
\vskip 0.25cm

\noindent
\textbf{Mathematics Subject Classification (2010)\,:} 49Q10, 76D07, 76D55, 93B05, 93B07, 93C95. 

\end{abstract}


\section{Introduction and main results}

	Let $T>0$, and let $\Omega\subset \mathbb{R}^N$, $N\geq2$, be a bounded connected open set with a smooth boundary 
	$\partial\Omega$. We will use the notation $Q=\Omega\times(0,T)$,
	$\Sigma=\partial\Omega\times(0,T)$,  and we will denote by $\mat{\nu}=\mat{\nu}(\xvec)$ the outward unit 
	normal vector to $\Om$ at $\xvec\in\partial\Omega$. Throughout the paper spaces of $\mathbb{R}^N$-valued 
	functions, as well as their elements, are represented by boldface letters.

	The present paper deals with an observability inequality on measurable sets of positive measure
	 for the Stokes system
\begin{equation}\label{eq:stokes}
	\left |   
		\begin{array}{lcl}
			\zvec_t - \Delta \zvec  +\nabla q = \ovec 	&  \mbox{in}	&  	Q,  \\
			  \noalign{\smallskip}\dis
			\Div{\zvec}  = 0 				&  \mbox{in}	&    	Q,  \\
			  \noalign{\smallskip}\dis
			\zvec = \ovec 					& \mbox{on} 	& \Sigma, \\
			  \noalign{\smallskip}\dis
			\zvec(\cdot,0) = \zvec_0 			& \mbox{in} 	& \Omega.
		\end{array}
	\right. 
\end{equation}
	System \eqref{eq:stokes} is a linearization of the Navier-Stokes system for a homogeneous viscous incompressible fluid
	(with unit density and unit kinematic viscosity) subject to homogeneous Dirichlet boundary conditions. 
	Here, $\zvec$ is the $\mathbb R^N$-valued velocity field and $q$ stands for the scalar pressure. 	
	
Our motivation to obtain an observability inequality on measurable sets for the Stokes system \eqref{eq:stokes} comes from the well-known fact that observability inequalities are equivalent to controllability properties. In the case we are dealing with, this will be equivalent to the null controllability of system \eqref{eq:stokes} with bounded controls acting on   measurable sets with positive measure, and will have important applications in shape optimization problems and in the study of the bang-bang property for time and norm optimal control problems for system \eqref{eq:stokes} (see 
Section \ref{applications}).

	Observability inequalities for  system \eqref{eq:stokes}  from a cylinder $\omega\times(0,T)$, with 
	$\omega\subset\Omega$ being a non-empty open set, have been proved in different ways by several authors in the past  few years. For instance, in \cite{FI}, the observability inequality for the Stokes system is obtained by means of global Carleman inequalities for parabolic equations with zero Dirichlet boundary conditions (see also \cite{CG} and \cite{FGIP}). Another 
	proof is given in \cite{IPY} by means of Carleman inequalities for parabolic equations with non-homogeneous 
	Dirichlet boundary conditions applied to the system satisfied by the vorticity $curl\, z$.  More recently, in \cite{CL1}, a new proof was established based on a spectral inequality for the eigenfunctions of the Stokes operator.

	Concerning observability inequalities over  general measurable sets in space and time variables, as far as we know, the first result was obtained in \cite{AEWZ} for the heat equation in a bounded and locally star-shaped domain, and later extended in  \cite{EMZ1} and \cite{EMZ2}
	 to the case of  parabolic systems with time-independent  analytic coefficients associated to 
	possibly non self-adjoint elliptic operators and higher order parabolic evolutions with the analytic coefficients 
	depending on space and time variables, when the boundary of the bounded domain in which the equation evolves 
	is global analytic. We also refer the interested 
reader to \cite{AE,PW1,W1} for some earlier and closely related results on this subject.
	
	For the Stokes system, the only result we know is the one in \cite{WZ1}, which gives an observability inequality from a  measurable subset with positive measure in the time variable. In there, the argument is mainly based on the theory of analytic 
	semigroups. In this paper, we extended the result in \cite{WZ1} to the case of observations from sets of positive measure in both time and space variables.

	Before presenting our main results, we first introduce the usual spaces in the context of fluid mechanics:
$$
	\bold{V} =   \{ \yvec \in \Hvec^1_0(\Omega)^N;~\Div \yvec = 0\},
$$
$$
	\bold{H} = \{\yvec \in \Lvec^2(\Omega)^N;~\Div \yvec = 0,~\yvec\cdot\mat{\nu} =0 \mbox{ on }\partial \Omega \}.
$$

Throughout  the paper, the following notation will be used: $B_R(x_0)$ denotes a ball in $\mathbb R^N$ of radius 
	$R>0$ and with  center $x_0\in \mathbb  R^N$; $|\omega|$ is  the  Lebesgue measure of 
	a subset $\omega \subset \Omega$ and  $C(\cdots)$ stands for a positive constant depending only on the parameters 
	within the brackets, and it may vary from line to line in the context.

	Our first result is a $L^1$-observability inequality from measurable sets with positive measure for system \eqref{eq:stokes}.
\begin{theorem}\label{obser}
	Let $B_{4R}(\xvec_0)\subset\Omega$. For any measurable subset $\mathcal{M} \subset B_R(\xvec_0) \times (0,T)$ with positive measure,  there exists a positive constant $C_{obs}=C(N,R,\Omega,\mathcal{M},T)$ 
	such that the observability estimate
\be\label{Obsinequality}
	\|\zvec(T,\cdot)\|_{\bold{H}} \leq C_{obs}\int_{\mathcal{M}}\left| \zvec(\xvec,t)\right|d\xvec dt
\ee
	holds for all $\zvec_0 \in \bold{H}$.
\end{theorem}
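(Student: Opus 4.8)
The plan is to follow the by-now standard Lebeau--Robbiano strategy adapted to measurable observation sets, exactly as in \cite{AEWZ} for the heat equation, with the Stokes-specific spectral inequality playing the role of the Lebeau--Robbiano inequality. The first ingredient is a \emph{spectral inequality on a measurable set} for the Stokes operator: if $\{\evec_k\}$ denotes an orthonormal basis of $\bold{H}$ consisting of eigenfunctions of the Stokes operator $A$ with eigenvalues $0<\lambda_1\le\lambda_2\le\cdots$, then for any measurable $\omega\subset B_R(\xvec_0)$ of positive measure there are constants $C,c>0$ (depending on $N,R,\Omega,\omega$) such that
\be\label{specineq}
\Big\|\sum_{\lambda_k\le\mu} a_k\evec_k\Big\|_{\bold{H}}\le C e^{c\sqrt{\mu}}\int_{\omega}\Big|\sum_{\lambda_k\le\mu}a_k\evec_k(\xvec)\Big|\,d\xvec,\qquad \mu>0 .
\ee
This is presumably the content of an earlier section of the paper (the abstract advertises ``spectral inequalities on measurable sets for the Stokes operator''), so I will quote it. Its proof in turn rests on the analyticity of finite sums of Stokes eigenfunctions inside $\Omega$ (elliptic regularity for the Stokes system), quantitative analyticity/propagation-of-smallness estimates, and a Vitali-type covering argument reducing the measurable set $\omega$ to a small ball, combined with a doubling/three-balls inequality with the explicit frequency dependence $e^{c\sqrt\mu}$.

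Next I would handle the \emph{time variable}. Since $\mathcal{M}\subset B_R(\xvec_0)\times(0,T)$ has positive measure, Fubini gives a set of times $t$ of positive measure for which the slice $\mathcal{M}_t=\{\xvec:(\xvec,t)\in\mathcal M\}\subset B_R(\xvec_0)$ has positive measure; more precisely, there exist a measurable set $E\subset(0,T)$ with $|E|>0$ and a $\gamma>0$ such that $|\mathcal M_t|\ge\gamma$ for $t\in E$. Pick a density point $\ell\in(0,T]$ of $E$ (we may assume $\ell=T$ after a harmless translation/rescaling, or else run the final telescoping on $(0,\ell)$ and then use the decay of the semigroup on $(\ell,T)$). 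Around $\ell$ one constructs, as in \cite{AEWZ}, a decreasing sequence $\ell=z_0>z_1>z_2>\cdots\to\ell^-$ (strictly inside the density region) with controlled gaps, and a dyadic sequence of frequency cutoffs $\mu_m\sim 2^{2m}$.

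The core is then the \emph{iteration}. Write $\zvec(t)=\sum_k c_k e^{-\lambda_k t}\evec_k$ with $\sum|c_k|^2<\infty$ and split $\zvec=P_{\mu_m}\zvec+(I-P_{\mu_m})\zvec$ at each step, where $P_\mu$ is the spectral projector. On the low modes apply \eqref{specineq} to get $\|P_{\mu_m}\zvec(t)\|_{\bold H}\le Ce^{c\sqrt{\mu_m}}\int_{\mathcal M_t}|\zvec(\xvec,t)|\,d\xvec$ for $t\in E$ near $\ell$; integrating this over a suitable time window $I_m$ of positive measure on which $|\mathcal M_t|\ge\gamma$ (again from the density property) yields $\|P_{\mu_m}\zvec(z_{m})\|_{\bold H}^2\lesssim e^{c\sqrt{\mu_m}}\int_{\mathcal M\cap(B_R\times I_m)}|\zvec|\,d\xvec dt$, using the dissipation $\|\zvec(z_m)\|\le\|\zvec(t)\|$ for $t\le z_m$ to move the time. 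On the high modes, the Stokes semigroup gives the exponential smallness $\|(I-P_{\mu_m})\zvec(z_m)\|_{\bold H}\le e^{-\lambda_{m+1}(z_m-z_{m+1})/2}\|(I-P_{\mu_m})\zvec(z_{m+1})\|_{\bold H}$, and choosing the gaps $z_m-z_{m+1}$ so that $e^{c\sqrt{\mu_m}}$ is beaten by $e^{-\lambda_{m+1}(z_m-z_{m+1})}\le e^{-\mu_{m+1}(z_m-z_{m+1})}$ lets one absorb the low-mode constant. Telescoping the resulting recursion for $a_m:=\|\zvec(z_m)\|_{\bold H}$ along $m\to\infty$, together with $z_m\to\ell$ and the strong continuity of the semigroup, gives
\be\label{obsatell}
\|\zvec(\ell,\cdot)\|_{\bold H}\le C\int_{\mathcal M}|\zvec(\xvec,t)|\,d\xvec dt ,
\ee
and finally $\|\zvec(T,\cdot)\|_{\bold H}\le\|\zvec(\ell,\cdot)\|_{\bold H}$ by the contractivity of the semigroup on $\bold H$ yields \eqref{Obsinequality}.

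The main obstacle I anticipate is the interface between the \emph{vector-valued, divergence-constrained} structure of the Stokes problem and the scalar machinery of \cite{AEWZ}: one needs the quantitative unique-continuation / three-balls inequality and the propagation of smallness to hold for finite sums of Stokes eigenfunctions (which satisfy an elliptic \emph{system} with a pressure term, not a scalar elliptic equation), and one must be sure the constant in the spectral inequality \eqref{specineq} genuinely depends on the measurable set $\omega$ only through a density/measure parameter so that it can be applied uniformly along the Vitali subcover. Handling the pressure — e.g.\ by taking the curl, or by noting that each eigenfunction component is real-analytic in $\Omega$ with Cauchy estimates controlled by $\sqrt{\mu}$, exactly as in \cite{CL1} for open observation sets — is the technical heart; everything else is the standard telescoping bookkeeping plus Fubini and Lebesgue density arguments.
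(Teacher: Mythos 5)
Your plan is essentially the paper's own proof: the spectral inequality on measurable sets that you quote is exactly Theorem~\ref{4241} (proved there via the curl, quantitative interior analyticity and Vessella's propagation of smallness), and Theorem~\ref{obser} is then obtained by Fubini slices $\mathcal M_t$, a Lebesgue density point in time, and the telescoping-series iteration, just as you outline. The only difference is packaging: the paper first condenses the low/high frequency splitting and the optimization in the cutoff $\Lambda$ into the two-time interpolation estimate $\|\zvec(\cdot,t)\|_{\bold{H}}\le\left(Ce^{C/(t-s)}\|\zvec(\cdot,t)\|_{\Lvec^1(\omega)}\right)^{1/2}\|\zvec(\cdot,s)\|_{\bold{H}}^{1/2}$ (Proposition~\ref{estinterpo}, via Robbiano's lemma) and then telescopes using the Phung--Wang density lemma (Lemma~\ref{diadicdecomposition}), the telescoped bound landing at the time $l_1>l$ near the density point rather than at the density point itself, after which contractivity of the Stokes semigroup transports it to $T$, exactly as in your final step.
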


\begin{remark}
	When the observation set is $\mathcal M = B_R(\xvec_0) \times (0, T )$, one can see  
	that the observability constant $C_{obs}$ has the form $Ce^{C/T}$ with $C=C(N,\Omega,R)>0$. This is in 
	accordance with the very recent result \cite[Theorem 1.1]{CL1}.
\end{remark}
\begin{remark}\label{rmq:tecnical}
	The above technical assumption imposed on the measurable set $\mathcal M$ is just to simplify the 
	statement of the main result. Without loss of generality, for any measurable set  $\mathcal M \subset \Omega\times(0,T)$ with  positive measure, 
	one  can always assume that 
$$
	\mathcal M \subset B_{R}(\xvec_0)\times(0,T)\;\;\text{with}\;\; \; B_{4R}(\xvec_0)\subset \Omega
$$
	for some $R>0$ and $\xvec_0\in\mathbb R^N$. Otherwise, one may choose a new measurable set 
	$\widetilde{\mathcal M}\subset\mathcal M$ with $|\widetilde{\mathcal M}|\geq c|\mathcal M|$ 
	for some constant $0<c<1$. 
\end{remark}

	The method we shall use to prove Theorem~\ref{obser} relies mainly  on the telescoping series method
	\cite{AEWZ} (which is in part inspired by \cite{Miller2} and \cite{Sei}), the propagation of 
	smallness for real-analytic functions on measurable sets \cite{Vessella} as well as 
	an spectral inequality for Stokes system.
	
	\null
	
	\noindent Let $\{\evec_j\}_{j\geq1}$ be the sequence of eigenfunctions of the Stokes system
\begin{equation}\label{stokesspec}
	\left |   
		\begin{array}{lcl}
			 - \Delta \evec_j  +\nabla p_j = \lambda_j \evec_j  	&  \mbox{in}&  	\Omega,  \\
			  \noalign{\smallskip}\dis
			\Div\evec_j  = 0 						&  \mbox{in}& 	\Omega,  \\
			  \noalign{\smallskip}\dis
			\evec_j = \ovec 							& \mbox{on}& 	\partial \Omega, 
		\end{array}
	\right. 
\end{equation}
	with  the sequence of eigenvalues  $\{\lambda_j\}_{j\geq1}$ satisfying
$$
	0<\lambda_1\leq \lambda_2 \leq \ldots\;\;\;\;\text{and}\;\;\;\;\lim_{j\rightarrow \infty}\lambda_j = +\infty.
$$

	The following inequality is proved in \cite{CL1}.
	
\begin{theorem}\label{5025}\text{\cite[Theorem 3.1]{CL1}}
	For any non-empty open subset $\mathcal O\subset\Omega$, there exists a constant $C=C(N,\Omega,\mathcal O)>0$ 
	such that
\be\label{5021}
	\sum_{\lambda_j \leq \Lambda}a_j^2  = \int_{\Omega} \left | \sum_{\lambda_j \leq \Lambda} a_j \evec_j(\xvec) \right |^2d\xvec 
	\leq Ce^{C\sqrt{\Lambda}} \int_{\mathcal O}\left | \sum_{\lambda_j \leq \Lambda} a_j\evec_j(\xvec) \right |^2 d\xvec,
\ee
	for any sequence of real numbers $\{a_j\}_{j\geq1}\in \ell ^2$ and any positive number 
	$\Lambda$.\footnote{Recall that $\ell^2\triangleq\left\{\{a_j\}_{j\geq1}:\,
	\sum\limits_{j=1}^{+\infty}a_j^2< +\infty\right\} $.}
\end{theorem}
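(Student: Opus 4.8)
\textbf{Proof proposal for Theorem~\ref{5025}.}

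The plan is to reduce the vector-valued spectral inequality for the Stokes operator to a scalar spectral inequality for the Dirichlet Laplacian, for which the measurable-set (and in particular open-set) version is already available in the literature following the elliptic Carleman/Lebeau--Robbiano strategy. The key observation is that any finite linear combination $\uvec_\Lambda = \sum_{\lambda_j \leq \Lambda} a_j \evec_j$ belongs to $\bold{V}$ and satisfies the stationary Stokes equation $-\Delta \uvec_\Lambda + \nabla p_\Lambda = \Fvec_\Lambda$ with $\Fvec_\Lambda = \sum_{\lambda_j\leq\Lambda} a_j\lambda_j\evec_j$ and $\Div \uvec_\Lambda = 0$; moreover $\|\Fvec_\Lambda\|_{\bold H}^2 = \sum_{\lambda_j\leq\Lambda}a_j^2\lambda_j^2 \leq \Lambda^2 \|\uvec_\Lambda\|_{\bold H}^2$, so $\uvec_\Lambda$ is a solution of an elliptic system with right-hand side controlled by $\Lambda$ times the function itself. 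First I would note that, since the eigenfunctions are analytic in the interior of $\Omega$ (elliptic regularity for the Stokes system with analytic coefficients away from the boundary), the sum $\uvec_\Lambda$ is real-analytic on any compact subset of $\Omega$; then I would apply the abstract elliptic spectral inequality. Concretely, the cleanest route is: take the divergence-free constraint and the equation, eliminate the pressure (e.g.\ apply $\curl$ in $N=2$, or work componentwise after observing that each component of $\uvec_\Lambda$ together with $p_\Lambda$ solves a coupled elliptic system), and invoke the known spectral inequality for systems of the form considered in \cite{EMZ1} or the Lebeau--Robbiano-type estimate proved there for the Stokes operator.

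The main steps, in order: (1) Fix the finite combination $\uvec_\Lambda$; record that it lies in $\bold V$, satisfies the stationary Stokes system with source $\Fvec_\Lambda$, and that $\|\Fvec_\Lambda\|_{\bold H}\leq\Lambda\|\uvec_\Lambda\|_{\bold H}$. (2) Pass to an augmented elliptic problem in $\Omega\times(0,1)$ (or $\Omega\times\R$) by the classical Lebeau--Robbiano lifting trick: set $w(\xvec,s) = \sum_{\lambda_j\leq\Lambda} a_j \frac{\sinh(\sqrt{\lambda_j}\,s)}{\sqrt{\lambda_j}}\evec_j(\xvec)$, so that $-\Delta_{\xvec,s} w + \nabla_{\xvec} \tilde p = 0$ is a genuine stationary Stokes system in $\Omega\times(0,1)$ with $w|_{s=0}=0$, $\p_s w|_{s=0} = \uvec_\Lambda$, and with $w$ vanishing on $\partial\Omega\times(0,1)$. (3) Apply a Carleman estimate / interior-boundary quantitative unique continuation for the stationary Stokes system (as in \cite{FI}, \cite{CL1}) to propagate information from $\mathcal O\times(0,1)$ to the whole slab, extracting the factor $e^{C\sqrt\Lambda}$ from the explicit $s$-dependence of $w$. (4) Evaluate at $s=0$ using $\p_s w|_{s=0}=\uvec_\Lambda$ to recover \eqref{5021}; the $L^2$-identity on the left is just Parseval/orthonormality of $\{\evec_j\}$ in $\bold H$.

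I expect the main obstacle to be Step (3): handling the \emph{pressure} in the quantitative unique continuation estimate. Unlike the scalar heat/Laplace case, one cannot directly apply scalar Carleman inequalities to $\uvec_\Lambda$ because of the coupling term $\nabla p_\Lambda$, and the pressure is only determined up to the equation and does not vanish on the boundary in any obvious way. The standard remedy—taking the curl to kill the gradient, which turns \eqref{stokesspec} into a (vector) heat-type equation for $\curl\evec_j$, at the cost of losing the homogeneous Dirichlet condition and needing to control $\curl\uvec_\Lambda$ on $\partial\Omega$—introduces boundary terms that must be absorbed; alternatively, one uses the pressure estimate $\|\nabla p_\Lambda\|_{H^{-1}}\leq C(\|\uvec_\Lambda\|+\|\Fvec_\Lambda\|)$ together with a local elliptic estimate to close the argument. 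Since \cite{CL1} has already carried out precisely this analysis to obtain \eqref{5021} for open sets, for our purposes we simply cite their Theorem~3.1; a self-contained proof would require reproducing their Carleman machinery for the Stokes system, which is exactly the step I would flag as the technical heart of the matter. The remaining steps (1), (2), (4) are routine manipulations of the spectral expansion and carry no essential difficulty.
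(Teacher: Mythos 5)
The paper itself offers no proof of this statement: it is quoted directly as \cite[Theorem 3.1]{CL1}, and your proposal, after sketching the Lebeau--Robbiano lifting and correctly identifying the pressure term as the technical heart, ultimately rests on citing that same reference, so it is essentially the same approach as the paper. One small caution if you ever make the sketch self-contained: in step (3) the observation should be taken as the interface datum $\partial_s w|_{s=0}=\uvec_\Lambda$ on $\mathcal O\times\{0\}$ (with $w|_{s=0}=0$), not on $\mathcal O\times(0,1)$, since the explicit $s$-dependence of $w$ cannot be factored out pointwise on a subdomain to yield $e^{C\sqrt{\Lambda}}\int_{\mathcal O}|\uvec_\Lambda|^2\,d\xvec$.
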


Spectral inequality \eqref{5021} allow us to control  the low frequencies of the Stokes system with a precise estimate on the cost of controllability with respect to the frequency length which, combined with the decay of solutions of \eqref{eq:stokes},  implies the null controllability of Stokes system with $\bold{L}^2$-controls applied to arbitrarily small open sets.


	Our second main result is an extension of the spectral inequality \eqref{5021} from open sets to 
	measurable sets of positive measure.
\begin{theorem}\label{4241}
	Let $B_{4R}(\xvec_0)\subset \Omega$ and let  $\omega \subset B_{R}(\xvec_0)$ be a measurable set with positive measure. Then, there 
	exists a constant $C=C(N,R,\Omega,|\omega|)>0$ such that 
\be \label{specmeasurable}
	\left(\sum_{\lambda_j \leq \Lambda}a_j^2\right)^{1/2}  
	\leq Ce^{C\sqrt{\Lambda}} \int_{\omega}\left| \sum_{\lambda_j \leq \Lambda} a_j\evec_j(\xvec) \right | d\xvec,
\ee
	for all $\Lambda>0$ and  any sequence of real numbers $\{a_j\}_{j\geq1}\in \ell ^2$.
\end{theorem}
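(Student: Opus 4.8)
The plan is to upgrade the $L^2$-spectral inequality \eqref{5021} in two stages: first from an $L^2$ right-hand side on an open set to an $L^2$ right-hand side on a measurable set of positive measure, and then from an $L^2$ to an $L^1$ right-hand side on that measurable set. The key analytic input is that any finite linear combination $u_\Lambda(\xvec) := \sum_{\lambda_j\leq\Lambda} a_j \evec_j(\xvec)$ is real-analytic in $\Omega$, with quantitative bounds on its analyticity radius and its growth controlled by $e^{C\sqrt\Lambda}$; this is the Stokes analogue of the Lebeau--Robbiano-type elliptic estimates, and follows from interior elliptic regularity for the Stokes system applied to $(u_\Lambda,\,\sum_{\lambda_j\leq\Lambda}a_j p_j)$ together with the eigenvalue bound $\lambda_j\leq\Lambda$. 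Concretely, on a slightly shrunken ball one gets $\|\partial^\alpha u_\Lambda\|_{L^\infty} \leq C (C\sqrt\Lambda\,/\rho)^{|\alpha|}\,|\alpha|!\,\|u_\Lambda\|_{L^2(\Omega)}$ for a fixed $\rho>0$, so that $u_\Lambda$ belongs to a Gevrey/real-analytic class with parameters depending polynomially on $\sqrt\Lambda$.

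First I would reduce to an open-set estimate with an explicit loss. Pick a ball $B := B_r(\xvec_1)\subset B_R(\xvec_0)$ with $|\,\omega\cap B\,| \geq \theta|B|$ for some $\theta=\theta(|\omega|,R)>0$ (possible since $\omega\subset B_R(\xvec_0)$ has positive measure, by a Lebesgue-density / Vitali argument), and apply \eqref{5021} with $\mathcal O = B_r(\xvec_1)$ to bound $\sum_{\lambda_j\leq\Lambda}a_j^2$ by $Ce^{C\sqrt\Lambda}\int_{B}|u_\Lambda|^2$. Now invoke the propagation-of-smallness result of Vessella \cite{Vessella} for real-analytic functions: there exist constants $\alpha\in(0,1)$ and $C>0$, depending on $\theta$, $r$ and on the analyticity parameters of $u_\Lambda$ (hence ultimately on $N$, $R$, $\Omega$, $|\omega|$ and $\Lambda$), such that
\be\label{eq:vessella-use}
	\|u_\Lambda\|_{L^\infty(B)} \;\leq\; C\,\|u_\Lambda\|_{L^\infty(B_{2r}(\xvec_1))}^{\,1-\alpha}\left(\frac{1}{|\omega\cap B|}\int_{\omega\cap B}|u_\Lambda(\xvec)|\,d\xvec\right)^{\alpha}.
\ee
Combining \eqref{eq:vessella-use} with the interior $L^\infty$--$L^2$ bound $\|u_\Lambda\|_{L^\infty(B_{2r})} \leq Ce^{C\sqrt\Lambda}\|u_\Lambda\|_{L^2(\Omega)}$ and with $\int_B|u_\Lambda|^2 \leq |B|\,\|u_\Lambda\|_{L^\infty(B)}^2$, one obtains
$$
	\Big(\sum_{\lambda_j\leq\Lambda}a_j^2\Big)^{1/2} \;\leq\; Ce^{C\sqrt\Lambda}\Big(\sum_{\lambda_j\leq\Lambda}a_j^2\Big)^{(1-\alpha)/2}\Big(\int_{\omega}|u_\Lambda|\,d\xvec\Big)^{\alpha},
$$
after using $\|u_\Lambda\|_{L^2(\Omega)}^2 = \sum_{\lambda_j\leq\Lambda}a_j^2$ (orthonormality of the $\evec_j$ in $\bold H$). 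Dividing through by the low power of $\sum a_j^2$ and raising to the power $1/\alpha$ yields exactly \eqref{specmeasurable} with a possibly larger constant $C$ in the exponent $e^{C\sqrt\Lambda}$; here one must check that the exponent stays of the form $C\sqrt\Lambda$ after the algebraic manipulation, which is automatic since $\alpha$ is a fixed constant independent of $\Lambda$.

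The main obstacle is controlling the dependence of the Vessella exponent $\alpha$ and constant $C$ on $\Lambda$: a priori the analyticity radius of $u_\Lambda$ shrinks like $\rho/\sqrt\Lambda$, which could degrade $\alpha$ to something $\Lambda$-dependent and destroy the clean $e^{C\sqrt\Lambda}$ bound. The way around this, exactly as in \cite{AEWZ} for the heat equation and in \cite{EMZ1,EMZ2}, is to rescale: instead of applying propagation of smallness to $u_\Lambda$ directly, apply it to the function obtained after a dilation of the space variable by a factor $\sqrt\Lambda$ (or, equivalently, track the $\Lambda$-dependence through the Gevrey-class quantitative unique continuation of Vessella, whose three-ball/measurable-set constants depend only on the ratio of the two radii and the Gevrey parameter, not on the base radius), so that the rescaled function has analyticity parameters bounded uniformly in $\Lambda$, and all the $\Lambda$-dependence is pushed into the multiplicative prefactor, where it contributes only an extra $e^{C\sqrt\Lambda}$. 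I would carry out this rescaling carefully as the technical heart of the argument; everything else is the elliptic regularity estimate for \eqref{stokesspec}, the Lebesgue-density selection of the good ball $B$, and bookkeeping of constants.
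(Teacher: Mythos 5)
Your overall skeleton (quantify the real-analyticity of $\uvec_\Lambda$, apply the Vessella propagation-of-smallness lemma on $\omega$, combine with the open-set spectral inequality of Theorem~\ref{5025}, then absorb the low power of $\sum a_j^2$) matches the paper. But the technical heart of your argument has a genuine gap, precisely at the point you yourself flag as the main obstacle. Your analyticity input is $\|\partial^\alpha \uvec_\Lambda\|_{L^\infty}\leq C\,(C\sqrt\Lambda/\rho)^{|\alpha|}|\alpha|!\,\|\uvec_\Lambda\|_{L^2}$, i.e.\ an analyticity radius of order $\rho/\sqrt\Lambda$, and you propose to remove the $\Lambda$-dependence by dilating the space variable by $\sqrt\Lambda$. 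This cannot work: as you note yourself in the parenthetical remark, the constants in Vessella-type propagation of smallness depend only on the \emph{ratio} of the analyticity radius to the radius of the ball on which you want the sup bound (and on the relative measure of $\omega$), and that ratio is invariant under dilation. After rescaling you must still propagate smallness from the dilated copy of $\omega$ across a ball of radius $R\sqrt\Lambda$ using an analyticity radius of order one, so the H\"older exponent $\theta$ degrades with $\Lambda$ exactly as before; the subsequent division and raising to the power $1/\theta$ then destroys the $e^{C\sqrt\Lambda}$ form of the constant. In \cite{AEWZ,EMZ1} the uniformity in $\Lambda$ is \emph{not} obtained by rescaling but by the Lebeau--Robbiano device of adding an extra variable: one works with $\sum_{\lambda_j\leq\Lambda}a_j e^{s\sqrt{\lambda_j}}(\cdot)$ on $\Omega\times(-1,1)$, which solves a fixed elliptic equation independent of $\Lambda$, so that the analyticity radius is uniform and the whole $\Lambda$-dependence enters only through the size $M\sim e^{C\sqrt\Lambda}(\sum a_j^2)^{1/2}$, where it is harmless.

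There is also a Stokes-specific issue your proposal glosses over: you cannot simply run this extra-variable construction (or ``interior elliptic regularity'') on $(\uvec_\Lambda,\sum a_j p_j)$, because $\wvec_\Lambda(\xvec,s)=\sum a_j e^{s\sqrt{\lambda_j}}\evec_j(\xvec)$ satisfies $-\partial^2_{ss}\wvec_\Lambda-\Delta_\xvec\wvec_\Lambda=-\nabla_\xvec q_\Lambda$ with a nontrivial pressure-type term, which is not a standard elliptic system in the $N+1$ variables. The paper circumvents the pressure by applying the $\curl$: $\vvec_\Lambda(\xvec,s)=\sum_{\lambda_j\leq\Lambda}a_j e^{s\sqrt{\lambda_j}}\,d\evec_j(\xvec)$ is genuinely harmonic in $(\xvec,s)$, hence (Lemma~\ref{analyticityDelta} with $f\equiv0$) analytic with radius depending only on $N$; the velocity is then recovered from $\Delta_\xvec\uvec_\Lambda=d^*\vvec_\Lambda(\cdot,0)$ (using $\Div\uvec_\Lambda=0$), and a second application of Lemma~\ref{analyticityDelta}, this time with analytic right-hand side, gives the bound \eqref{estA6} with radius uniform in $\Lambda$ and prefactor $e^{K\sqrt\Lambda}$. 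Only then do Vessella's lemma (with $\Lambda$-independent $\theta$) and Theorem~\ref{5025} combine as in your final algebraic step. So the absorption argument at the end of your proposal is fine, but the analyticity estimate feeding into it is not established by your route, and the rescaling idea you rely on to fix it fails for scale-invariance reasons.
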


\vspace{3pt}

\begin{remark}
	Inequality \eqref{specmeasurable} leads to  a null controllability result for the Stokes system with $\Lvec^\infty$-controls (see Theorem \ref{control_Linfty}).
\end{remark}


As we will see below, the proof of Theorem~\ref{4241} strongly depends on quantitative estimates of the interior spatial 
	analyticity for finite sums of eigenfunctions of the Stokes system \eqref{stokesspec}. As far as we know, for the Navier-Stokes equations,  this kind of  interior analyticity  has been first analyzed in \cite{Ka1} and \cite{Ma1}, where the authors consider a nonlinear elliptic system satisfied by the velocity $z$ and the vorticity 
	$curl \,z$  and show  the interior analyticity for the velocity $z$. However, since the boundary condition for the $curl\, z$ is not prescribed, the analyticity up to the boundary cannot be achieved by this method. 
	
	In this paper, in order to establish the spectral inequality \eqref{specmeasurable}, 
	we adapted the arguments in \cite{Ka1} and \cite{Ma1}, and  \cite[Theorem 5]{AEWZ},  to the low frequencies of the Stokes system.

	The paper is organized as follows. 
	In Section~\ref{spectral_measurable}, we shall present the proofs of  Theorems \ref{obser}
	and \ref{4241}. Section~\ref{applications} deals with some applications of main theorems for
	shape optimization and time optimal control problems of Stokes system. 
	Finally, in Appendix~\ref{appendix}, we prove some real-analytic estimates for solutions of the Poisson equation.


\section{Spectral and Observability inequalities}\label{spectral_measurable}


\subsection{Spectral inequality on measurable sets}

	This section is devoted to the proof of Theorem~\ref{4241}. Compared to the proof of \cite[Theorem 5]{AEWZ} 
	for the Laplace operator, we here encounter the difficulty due to the pressure  in the Stokes system. To circumvent that, we consider the equation satisfied by the $curl$ of the low frequencies, which is an equation without pressure but with no boundary conditions. 
	This allow us recover and quantify the interior real-analytic estimates based on 
	the $curl$ operator.

	We begin with an estimate of the propagation of smallness for real-analytic functions on measurable sets with positive measure, which plays a core ingredient in the proof of Theorem~\ref{4241}.
\begin{lemma}\label{Vessella}
	Assume that $\fvec:B_{2R}(\xvec_0)\subset \mathbb{R}^N\longrightarrow  \mathbb{R}^N$ is real-analytic and verifies
\[
	|\partial_x^\alpha \fvec(\xvec)|\le\frac{M|\alpha|!}{(\rho R)^{|\alpha|}},~\text{for}~\xvec\in B_{2R}(\xvec_0),~\alpha\in\N^N,
\]
	for some $M>0$ and $0<\rho\le 1$. 
	
	For any measurable set  $\omega \subset B_R(\xvec_0)$ with positive measure,  there are positive constants $C=C(R,N,\rho, |\omega|)$ and $\theta=\theta(R,N,\rho, |\omega|)$,  with 
	$\theta\in(0,1)$, such that
\[\label{estmeasurable}
	\|\fvec\|_{\Lvec^\infty(B_R(\xvec_0))}\leq C\left(\int_{\omega}\,|\fvec(\xvec)|\,d\xvec\right)^{\theta}M^{1-\theta}.
\]
\end{lemma}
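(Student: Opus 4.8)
The plan is to reduce the statement to the classical propagation-of-smallness / three-ball-type estimate for real-analytic functions on a measurable set of positive measure, which is Vessella's theorem \cite{Vessella} (used also in \cite{AEWZ}), and then to upgrade the scalar statement to the $\R^N$-valued one componentwise. First I would recall the scalar result: if $g:B_{2R}(\xvec_0)\to\R$ is real-analytic with $|\partial_x^\alpha g(\xvec)|\le M|\alpha|!/(\rho R)^{|\alpha|}$ on $B_{2R}(\xvec_0)$, and $E\subset B_R(\xvec_0)$ has $|E|>0$, then there are $C=C(R,N,\rho,|E|)$ and $\theta=\theta(R,N,\rho,|E|)\in(0,1)$ with
\[
	\|g\|_{L^\infty(B_R(\xvec_0))}\le C\Big(\frac{1}{|E|}\int_E|g(\xvec)|\,d\xvec\Big)^{\theta}M^{1-\theta}.
\]
Applying this to each component $f_i$ of $\fvec$ (with the same $M$ and $\rho$, since $|\partial_x^\alpha f_i|\le|\partial_x^\alpha\fvec|$), and then summing over $i=1,\dots,N$, one obtains $\|\fvec\|_{\Lvec^\infty(B_R(\xvec_0))}\lesssim\sum_i\|f_i\|_{L^\infty}\lesssim C\sum_i(\int_E|f_i|)^{\theta}M^{1-\theta}$, and since $|f_i|\le|\fvec|$ pointwise and there are only $N$ terms, this is bounded by $C(N,R,\rho,|\omega|)\,(\int_\omega|\fvec(\xvec)|\,d\xvec)^{\theta}M^{1-\theta}$ after absorbing the factor $|\omega|^{-\theta}\le\max(1,|B_R|)^{\theta}$-type constants into $C$ (note $|\omega|$ is itself one of the allowed parameters of $C$).

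The essential content, and what I expect to be the main technical obstacle, is the scalar Vessella estimate itself. The standard route is: (i) by the quantitative analyticity bound, $g$ extends holomorphically to a fixed complex neighborhood, so one has a local interpolation / doubling inequality $\|g\|_{L^\infty(B_{2r})}\le N_0\|g\|_{L^\infty(B_r)}$ on small balls with a doubling index controlled by $M$ (more precisely by $\log$ of the ratio of $M$ to a lower bound for $\|g\|$); (ii) a Remez-type or Nachbin-type inequality on a single small ball converts the $L^\infty$ norm on that ball into the $L^1$ norm over the positive-measure piece $E\cap(\text{that ball})$ — this is where $|E|$ enters and where the measurability (rather than openness) is handled, via the fact that a set of positive measure has positive $(N{-}1)$-dimensional projection on many lines, reducing to the one-dimensional polynomial-like Remez inequality through Taylor expansion; (iii) a chaining/propagation argument over a Vitali-type covering of $B_R$ by such small balls propagates the smallness from $E$ to all of $B_R$, producing the interpolation exponent $\theta$. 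Since \cite{Vessella} states exactly this, the cleanest exposition is to quote it and merely perform the componentwise reduction and the constant bookkeeping above; if a self-contained argument is preferred, steps (i)–(iii) are the skeleton, with step (ii), the Remez-type inequality on measurable sets, being the delicate point.

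Finally I would check the scaling and parameter dependence carefully: translating so that $\xvec_0=0$ and rescaling $\xvec\mapsto R\xvec$ reduces everything to $R=1$ at the cost of replacing $|\omega|$ by $|\omega|/R^N$ and leaving $\rho$ and $M$ unchanged (the hypothesis $|\partial_x^\alpha\fvec|\le M|\alpha|!/(\rho R)^{|\alpha|}$ is exactly scale-invariant under this substitution), so it suffices to prove the $R=1$ case and then record that $C$ and $\theta$ depend on $R,N,\rho,|\omega|$ only. The hypothesis $0<\rho\le1$ guarantees the holomorphic extension radius is at least a fixed fraction of the ball radius, so the doubling index in step (i) is finite; one should note that $\theta$ degenerates ($\theta\to0$) as $|\omega|\to0$ or $\rho\to0$, which is consistent with the statement since $\theta$ is allowed to depend on these quantities.
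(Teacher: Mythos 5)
Your proposal is correct and is essentially what the paper does: the paper gives no proof of this lemma either, deferring exactly to the propagation-of-smallness estimate of \cite{Vessella} (with the simpler proof in \cite{AE} and the more general, vector-valued version in \cite{EMZ1}), which is the same external result your argument rests on. Your componentwise reduction and the scaling/bookkeeping remarks are routine and sound (the dependence of $C$ and $\theta$ on $|\omega|$ absorbs the averaging factor), so nothing further is required.
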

	\noindent 
	The above-mentioned local observability inequality for real-analytic functions was first established 
	in \cite{Vessella}. The interested reader can also find a simpler proof of Lemma~\ref{Vessella} in \cite[Section 3]{AE}, 
	and a more general extension in \cite[Lemma 2]{EMZ1}.
\vskip 6pt
\begin{proof}[\textbf{Proof of Theorem~\ref{4241}}]
	For each real number $\Lambda>0$ and each sequence $\{a_j\}_{j\geq1}\in \ell^2$, we define 
$$ 
	\uvec_{\Lambda}(\xvec) = \sum_{\lambda_j \leq \Lambda} a_j \evec_j(\xvec),\;\;\xvec\in\Omega,
$$
	and
$$ 
	\vvec_{\Lambda}(\xvec,s) = \sum_{\lambda_j \leq \Lambda} a_j  e^{s\sqrt{\lambda_j}} d \evec_j(\xvec),
	\quad (\xvec,s) \in \Omega \times (-1,1),
$$
	where $d$  denotes the $\curl$ operator.\footnote{In fact, $d$ is the differential which maps $1$-forms into $2$-forms. When a vector field $w$ is identified with a $1$-form, then  $dw$ can be identified  with a ${1\over2}N(N - 1)$-dimensional vector.}

	

	Because
$
	\vvec_{\Lambda}(\cdot,0) = d \uvec_{\Lambda}
$
	and $\Div_\xvec\uvec_{\Lambda} =0$, we have that
\begin{equation}\label{424-3}
	\Delta_\xvec \uvec_{\Lambda}(\xvec) = d^* \vvec_{\Lambda}(\xvec,0),\;\;\xvec\in\Omega,
\end{equation}
	where $d^*$ is the adjoint  of $d$.   

	Let us now obtain an estimate of the propagation of smallness for $\uvec_\Lambda$ on  measurable sets with positive measure. According to Lemma~\ref{Vessella}, it is sufficient to     quantify the analytic estimates of higher-order derivatives of $\uvec_\Lambda$.

	 Since $\vvec_{\Lambda}(\cdot,\cdot)$ satisfies 
$$
	-\partial^2_{ss}\vvec_{\Lambda}(\xvec,s)- \Delta_\xvec \vvec_{\Lambda}(\xvec,s)   = 0,  
	\quad (\xvec,s) \in \Omega \times (-1,1),
$$
 	we have that $d^*  \vvec_{\Lambda}$  verifies 
$$
	-\partial^2_{ss} d^*  \vvec_{\Lambda}(\xvec,s)- \Delta_\xvec d^*  \vvec_{\Lambda}(\xvec,s)   = 0,
  	\quad (\xvec,s) \in \Omega \times (-1,1)
$$
	 and, using Lemma~\ref{analyticityDelta} in the appendix with $f\equiv 0$, 
	$d^*\vvec_{\Lambda}$ is real-analytic in $B_{4R}(\xvec_0,0)$ and the following estimate holds

\begin{align*}
	\| \partial_\xvec^{\alpha}\partial_s^{\beta} d^* \vvec_{\Lambda}\|_{\Lvec^{\infty}(B_{2R}(\xvec_0,0))}
	\leq C{(|\alpha|+\beta)!\over (\rho R)^{|\alpha |+\beta}}  \left(\avint_{B_{4R} (\xvec_0,0)}\!\!\!\!\! 
	|d^*  \vvec_{\Lambda}(\xvec,s)|^2d\xvec ds\right)^{1/2}\!\!\!\!\!,
	\;\;\forall \alpha\in\mathbb N^N,\,\beta\geq0,
\end{align*}
	where the positive constants $\rho$ and $C$ only depend on the dimension $N$.
	
	 Taking $\beta =0$ in the previous estimate, we readily 
	obtain 

\begin{align}\label{estB1}
	\|\partial_x^{\alpha} d^*\vvec_{\Lambda}(\cdot,0) \|_{\Lvec^{\infty}(B_{2R}(\xvec_0))}  
	\leq  C{|\alpha|!\over (\rho R)^{|\alpha |}}\left(\avint_{B_{4R} (\xvec_0,0)}\!\!\!\!\! 
	|d^*  \vvec_{\Lambda}(\xvec,s)|^2d\xvec ds\right)^{1/2},\;\;\forall \alpha\in\mathbb N^N.
\end{align}

To bound the right-hand side in \eqref{estB1}, we set 
$$
	\wvec_{\Lambda}(\xvec,s ) = \sum_{\lambda_j \leq \Lambda} a_j  e^{s\sqrt{\lambda_j}} \evec_j(\xvec), 
	\;\;(\xvec,s)\in \Omega\times(-1,1)
$$
and then  the following  estimate holds
\[
\begin{alignedat}{2}
	\|  d^*  \vvec_{\Lambda} \|_{\Lvec^2(B_{4R} (\xvec_0,0))}^2 
	\leq & ~C \|\wvec_{\Lambda}\|_{L^2((-1,1); \Hvec^2(\Omega))}^2 \\
	\noalign{\smallskip}\dis
	\leq & ~C \int^1_{-1}\| \Avec \wvec_{\Lambda}(\cdot,s)\|_{\bold{H}}^2 \,ds,
\end{alignedat}
\]
where we have used the fact that there exists $C = C(N, \Omega) >0$ such that
$$
\frac{1}{C} \| \yvec\|_{\Hvec^2(\Omega)} \leq   \| \Avec \yvec\|_{\bold{H}} \leq C \| \yvec\|_{\Hvec^2(\Omega)},
\quad \forall \yvec \in D(\Avec),
$$
with $\Avec$ being  the Stokes operator\footnote{The Stokes operator $\Avec: D(\Avec) \longrightarrow \bold H$ is defined by $\Avec = -P\Delta$, with $D(\Avec) = \big\{\yvec \in \bold V:\; \Avec \yvec \in \bold H\big\}$ and $P : \Lvec^2(\Omega) = \bold H\oplus \bold H ^\perp \longrightarrow \bold H$ is the Leray projection.}

	Since $\{e_j\}_{j\geq 1}$ is an orthonormal basis of $\Hvec$, the last estimate yields
\begin{equation}\label{estSope}
\begin{alignedat}{2}
	\|d^* \vvec_{\Lambda}\|_{\Lvec^2(B_{4R} (\xvec_0,0))}^2 
  	& \leq  Ce^{C\sqrt{\Lambda}} \sum_{\lambda_j \leq \Lambda} a_j^2,
\end{alignedat}
\end{equation}
for some $C>0$.

	Therefore, combining \eqref{estB1} and \eqref{estSope}, we have
\begin{align}\label{B2}
	\| \partial_x^{\alpha}d^* \vvec_{\Lambda}(\cdot,0) \|_{\Lvec^{\infty}(B_{2R}(\xvec_0))}
	\leq C{|\alpha|!\over (\rho R)^{|\alpha |}} e^{C\sqrt{\Lambda}} \left(\sum_{\lambda_j \leq \Lambda} a_j^2\right)^{1/2},\;\;\forall \alpha\in\mathbb N^N,
\end{align}
	\noindent where $C=C(N,\Omega)$.
	
	Since $\uvec_\Lambda$ solves the Poisson equation \eqref{424-3}, we have that $\uvec_\Lambda$ is real-analytic whenever the exterior force $d^*\vvec_{\Lambda}(\cdot,0)$ is real-analytic. Now, thanks to
	\eqref{B2}, we can apply again Lemma~\ref{analyticityDelta} to obtain that

\[
	\| \partial_x^{\alpha}\uvec_{\Lambda}\|_{\Lvec^{\infty}(B_{R}(\xvec_0))}\leq
	(R\tilde\rho)^{-|\alpha| -1}|\alpha|! \left(\|  \uvec_{\Lambda}\|_{\Lvec^2(B_{2R}(\xvec_0))} +
	  Ce^{C\sqrt{\Lambda}} \left(\sum_{\lambda_j \leq \Lambda} a_j^2\right)^{1/2} \right),\;\;\forall \alpha\in\mathbb N^N,
\]
for some constant $\tilde\rho>0$.

	Noticing that 
\[
\|\uvec_{\Lambda} \|_{\Lvec^2(B_{2R} (\xvec_0))}^2\leq \|\uvec_{\Lambda} \|_{\Hvec}^2 =  \sum_{\lambda_j \leq \Lambda} a_j^2,
\]
	one can see that
\begin{align}\label{estA6}
	\|\partial_x^{\alpha} \uvec_{\Lambda}\|_{\Lvec^{\infty}(B_{R}(\xvec_0))}
	\leq{|\alpha|!\over(\rho R)^{|\alpha|}}e^{K\sqrt{\Lambda}}\left(\sum_{\lambda_j \leq \Lambda} a_j^2\right)^{1/2},\;\;\forall \alpha\in\mathbb N^N,
\end{align}	
	where $\rho$ and $K$ are positive constants independent of $\Lambda$.

	Using \eqref{estA6} and Lemma \ref{Vessella}, applied to the real-analytic function $\uvec_\Lambda$, we obtain the estimate
\begin{equation}\label{estmeasurable1}
	\|\uvec_{\Lambda}\|_{\Lvec^\infty(B_{R}(\xvec_0))}
	\leq C\left(\int_{\omega}\,|\uvec_{\Lambda}(\xvec)|\,d\xvec\right)^{\theta}
	\left(e^{K\sqrt{\Lambda}}\left(\sum_{\lambda_j \leq \Lambda} a_j^2\right)^{1/2} \right)^{1-\theta}
\end{equation}
	 for some constants $C=C(N,R,\Omega,|\omega|)>0$ and $\theta=\theta(N,R,\Omega,|\omega|)\in(0,1)$.

	On the other hand, by the spectral inequality given in Theorem~\ref{5025}, there exists
	 $C=C(\Omega,R,N)$ such that 
\[
	\left(\sum_{\lambda_j \leq \Lambda} a_j^2\right)^{1/2}
	\leq Ce^{C\sqrt{\Lambda}}\|\uvec_{\Lambda}\|_{\Lvec^\infty(B_{R}(\xvec_0))}.
\]
	
	The above inequality and \eqref{estmeasurable1} then leads to
\[
	\left(\sum_{\lambda_j \leq \Lambda} a_j^2\right)^{1/2} 
	\leq Ce^{C\sqrt{\Lambda}} \left(\int_{\omega}\,|\uvec_{\Lambda}(\xvec)|\,d\xvec\right)^{\theta} 
	\left(\sum_{\lambda_j \leq \Lambda} a_j^2\right)^{(1-\theta)/2},
\]
	which give us the desired observability inequality
\[
	\left(\sum_{\lambda_j \leq \Lambda} a_j^2\right)^{1/2} 
	\leq  Ce^{C\sqrt{\Lambda}} \int_{\omega}\,|\uvec_{\Lambda}(\xvec)|\,d\xvec.
\]
\end{proof}


\subsection{Observability inequality on measurable sets in space-time variables}

	This Section is devoted to the proof of Theorem~\ref{obser}. 
	
	We begin with an interpolation estimate for the solutions of the Stokes system, which will be estimate a consequence of the spectral inequality given in Theorem~\ref{4241} and the exponential decay of solutions 
	of the Stokes system, and can be seen as a quantitative estimate of the strong uniqueness of
	solutions. We refer the reader  to \cite{AEWZ,EMZ1, WZ1} for closely related results concerning  the strong 
	unique continuation property for general parabolic equations.
\begin{proposition}\label{estinterpo}
	Let $B_{4R}(\xvec_0)\subset \Omega$ and let  $\omega \subset B_{R}(\xvec_0)$ be a measurable set with positive measure.
	Then, there exists $C =C(\Omega, |\omega|)>0$ such that 
$$
	\| \zvec(\cdot,t)\|_{\bold{H}}\leq \left(  C e^{\frac{C}{t-s}}\| \zvec(\cdot,t)\|_{\Lvec^1(\omega)}\right)^{1/2}
	\| \zvec(\cdot,s)\|_{\bold{H}}^{1/2},
	\;\;\;\forall \zvec_0 \in \bold{H},
$$
where $0\leq s<t\leq T$ and  $\zvec$ is the solution of \eqref{eq:stokes}  associated to $\zvec_0$.
\end{proposition}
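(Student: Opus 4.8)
The plan is to decompose the solution $\zvec(\cdot,t)$ into low and high frequency parts relative to a parameter $\Lambda>0$ to be chosen, and then balance the two contributions. Write $\zvec_0 = \sum_{j\geq1} a_j \evec_j$ so that $\zvec(\cdot,t) = \sum_{j\geq1} a_j e^{-\lambda_j t}\evec_j$, and set $\zvec^{\leq\Lambda}(\cdot,t) = \sum_{\lambda_j\leq\Lambda} a_j e^{-\lambda_j t}\evec_j$, $\zvec^{>\Lambda}(\cdot,t) = \zvec(\cdot,t) - \zvec^{\leq\Lambda}(\cdot,t)$. The high-frequency tail decays fast: by the exponential decay of the semigroup between times $s$ and $t$ one gets, using orthonormality of $\{\evec_j\}$ in $\bold H$, a bound of the form $\|\zvec^{>\Lambda}(\cdot,t)\|_{\bold H} \leq e^{-\Lambda(t-s)}\|\zvec(\cdot,s)\|_{\bold H}$ (since $e^{-\lambda_j t} = e^{-\lambda_j(t-s)}e^{-\lambda_j s}$ and $e^{-\lambda_j(t-s)}\leq e^{-\Lambda(t-s)}$ for $\lambda_j>\Lambda$). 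For the low-frequency part, the coefficients of $\zvec^{\leq\Lambda}(\cdot,t)$ are $b_j \triangleq a_j e^{-\lambda_j t}$, which still form an $\ell^2$ sequence indexed by $\{\lambda_j\leq\Lambda\}$, so Theorem~\ref{4241} applies and gives
\[
	\|\zvec^{\leq\Lambda}(\cdot,t)\|_{\bold H} = \Big(\sum_{\lambda_j\leq\Lambda} b_j^2\Big)^{1/2}
	\leq C e^{C\sqrt\Lambda}\int_\omega |\zvec^{\leq\Lambda}(\xvec,t)|\,d\xvec.
\]

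Next I would replace $\zvec^{\leq\Lambda}(\cdot,t)$ inside the integral by $\zvec(\cdot,t)$ at the cost of the high-frequency error: $\int_\omega|\zvec^{\leq\Lambda}(\xvec,t)|\,d\xvec \leq \int_\omega|\zvec(\xvec,t)|\,d\xvec + \int_\omega|\zvec^{>\Lambda}(\xvec,t)|\,d\xvec$, and the second term is controlled by $|\omega|^{1/2}\|\zvec^{>\Lambda}(\cdot,t)\|_{\Lvec^2(\Omega)} \leq |\omega|^{1/2} e^{-\Lambda(t-s)}\|\zvec(\cdot,s)\|_{\bold H}$. Combining, together with the trivial estimate $\|\zvec(\cdot,t)\|_{\bold H}\leq \|\zvec^{\leq\Lambda}(\cdot,t)\|_{\bold H} + \|\zvec^{>\Lambda}(\cdot,t)\|_{\bold H}$, yields for every $\Lambda>0$
\[
	\|\zvec(\cdot,t)\|_{\bold H} \leq C e^{C\sqrt\Lambda}\,\|\zvec(\cdot,t)\|_{\Lvec^1(\omega)}
	+ C e^{C\sqrt\Lambda} e^{-\Lambda(t-s)}\|\zvec(\cdot,s)\|_{\bold H}.
\]
Here I use that $\|\zvec(\cdot,t)\|_{\bold H}$ is nonincreasing in $t$, so the leftover direct term $\|\zvec^{>\Lambda}(\cdot,t)\|_{\bold H}\leq e^{-\Lambda(t-s)}\|\zvec(\cdot,s)\|_{\bold H}$ merges into the second term.

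The final step is the standard optimization. Denote $A = \|\zvec(\cdot,t)\|_{\Lvec^1(\omega)}$, $B = \|\zvec(\cdot,s)\|_{\bold H}$, $D = \|\zvec(\cdot,t)\|_{\bold H}$; we have $D \leq Ce^{C\sqrt\Lambda}A + Ce^{C\sqrt\Lambda - \Lambda(t-s)}B$ for all $\Lambda>0$. If $A=0$ then letting $\Lambda\to\infty$ forces $D=0$ and the inequality is trivial; otherwise choose $\Lambda$ so as to roughly equate the two terms. A clean choice is to first absorb the $\sqrt\Lambda$ in the exponent: since $C\sqrt\Lambda\leq \frac{\Lambda(t-s)}{2} + \frac{C^2}{2(t-s)}$, the second term is $\leq C e^{C^2/(2(t-s))} e^{-\Lambda(t-s)/2} B$. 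Then pick $\Lambda = \frac{2}{t-s}\log\!\big(\frac{B}{A}\vee 1\big)$ (with the convention that makes $\Lambda>0$), which makes the second term $\leq Ce^{C^2/(2(t-s))}A$, and the first term becomes $Ce^{C\sqrt\Lambda}A \leq C e^{C^2/(2(t-s))} e^{\Lambda(t-s)/2} A = Ce^{C^2/(2(t-s))}(B\vee A) \leq Ce^{C^2/(2(t-s))}(A + B)$. Thus $D \leq Ce^{C/(t-s)}(A^{?}\cdots)$ — to land exactly on the geometric-mean form stated, one instead keeps $\Lambda$ free, writes $D \leq Ce^{C\sqrt\Lambda}A + Ce^{-\Lambda(t-s)/2}e^{C/(t-s)}B$, and optimizes $\Lambda$ in $e^{C\sqrt\Lambda}A$ versus $e^{-\Lambda(t-s)/2}B$: choosing $\sqrt\Lambda = \frac{1}{t-s}\cdot(\text{const})$ when $A$ is comparable to $B$, or more systematically minimizing $\max\{e^{C\sqrt\Lambda}A,\,e^{C/(t-s)}e^{-\Lambda(t-s)/2}B\}$ over $\Lambda$, gives a bound $D\leq C e^{C/(t-s)} A^{1/2}B^{1/2}$ after noting $D\leq B$ allows one to trade a full power of $D$ for the square root. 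Concretely, from $D \leq Ce^{C\sqrt\Lambda}A + \tfrac12 D$ obtained by choosing $\Lambda$ with $e^{-\Lambda(t-s)/2}e^{C/(t-s)}B = \tfrac{1}{2C}D$ (legitimate since then $\sqrt\Lambda \sim \frac{1}{t-s}\log(CB/D)$, and one checks $e^{C\sqrt\Lambda}\leq e^{C/(t-s)}(B/D)^{1/2}$ up to constants), we get $D \leq Ce^{C/(t-s)}A (B/D)^{1/2}$, i.e. $D^{3/2}\leq Ce^{C/(t-s)}A B^{1/2}$, and since $D\leq B$ this is even stronger than, and in particular implies, $D \leq (Ce^{C/(t-s)}A)^{1/2}B^{1/2}$ after adjusting constants. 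I expect the main obstacle to be this last bookkeeping: keeping the constant in the exponent of the form $C/(t-s)$ rather than, say, $C/(t-s) + C\log(1/(t-s))$ requires care when absorbing the $e^{C\sqrt\Lambda}$ factor, and one must handle the degenerate cases $A=0$ and $t=s$ separately. The semigroup decay estimate and the application of Theorem~\ref{4241} to the time-evolved coefficients $b_j = a_j e^{-\lambda_j t}$ are routine; the analytic content is entirely in the spectral inequality already proved.
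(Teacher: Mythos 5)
Your argument is correct and is essentially the paper's own proof: split $\zvec$ into low and high frequencies relative to $\Lambda$, apply the spectral inequality of Theorem~\ref{4241} to the coefficients $a_je^{-\lambda_j t}$, use the decay $e^{-\Lambda(t-s)}$ of the high-frequency tail, and then optimize over $\Lambda$ after absorbing $C\sqrt{\Lambda}\le \frac{\Lambda(t-s)}{2}+\frac{C^2}{2(t-s)}$. The only (cosmetic) differences are that the paper reduces to $s=0$ and packages the final interpolation via Robbiano's lemma (Lemma~\ref{LemmaRob}), whereas you choose $\Lambda$ explicitly and close with the elementary observation that $D^{3}\le K^2A^2B$ together with $D\le B$ forces $D^2\le KAB$, which indeed checks out.
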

\begin{proof}
	It suffices to prove the estimate in the case  $s=0$. 
	 
	 For any $\Lambda>0$, we set 
$$
	\Hvec_\Lambda\triangleq\text{span}\big\{\evec_j; \lambda_j\leq \Lambda\big\}.
$$
	Given $\zvec_0 \in\bold{H}$, the solution $\zvec$ of \eqref{eq:stokes} can be split into $\zvec= \zvec_{\Lambda}+\zvec_{\Lambda}^{\perp}$,
	where $\zvec_{\Lambda}$ and $\zvec_{\Lambda}^{\perp}$ are the solutions of 
	\eqref{eq:stokes} (together with some pressures) associated to $\zvec_{0,\Lambda}\in \Hvec_\Lambda$ and $ \zvec_{0,\Lambda}^{\perp} \in 
	\Hvec_\Lambda^{\perp}$\footnote{$\Hvec_\Lambda^\perp=\text{span}\big\{\evec_j; \lambda_j> \Lambda\big\}$.}, 
	$\zvec_0 =\zvec_{0,\Lambda}+\zvec_{0,\Lambda}^{\perp}$, respectively.  Moreover, one has
\be\label{DFT0}
	\zvec_{\Lambda}(\cdot,t) \in \Hvec_\Lambda \;\;  \mbox{and} \;\;  
	\|\zvec_{\Lambda}^{\perp}(\cdot,t)\|_{\bold{H}} \leq e^{-\Lambda t}\|\zvec_0 \|_{\bold{H}},
\ee
	for every $t>0$. 

	From \eqref{specmeasurable} and  \eqref{DFT0}, for each $t>0$ we have 
\[
\begin{alignedat}{2}
	\|\zvec(\cdot,t) \|_{\bold{H}} 
	&\leq \|\zvec_{\Lambda}(\cdot,t) \|_{\bold{H}} + \|\zvec_{\Lambda}^{\perp}(\cdot,t) \|_{\bold{H}}\\
	&\leq Ce^{C\sqrt{\Lambda}}\|\zvec_{\Lambda}(\cdot,t) \|_{\Lvec^1(\omega)}  
	+e^{-\Lambda t}\|\zvec_0 \|_{\bold{H}} \\
	& \leq Ce^{C\sqrt{\Lambda}} \left( \|\zvec(\cdot,t) \|_{\Lvec^1(\omega)}
	+\|\zvec_{\Lambda}^{\perp}(\cdot,t)\|_{\bold{H}} \right) +e^{-\Lambda t}\|\zvec_0\|_{\bold{H}}\\
	&\leq Ce^{C\sqrt{\Lambda}} \left( \|\zvec(\cdot,t) \|_{\Lvec^1(\omega)}+e^{-\Lambda t}\|\zvec_0\|_{\bold{H}}\right)
	+e^{-\Lambda t}\|\zvec_0 \|_{\bold{H}}\\
	& \leq C_1e^{C_1\sqrt{\Lambda} - \frac{\Lambda}{2}t}
	\left( e^{\frac{\Lambda}{2} t}\|\zvec(\cdot,t)\|_{\Lvec^1(\omega)}
	+e^{-\frac{\Lambda}{2}t}\|\zvec_0 \|_{\bold{H}}\right)\\
	&\leq C_2e^{\frac{C_2}{t}} \|\zvec(t) \|_{\Lvec^1(\omega)}^{1/2}\|\zvec_0\|_{\bold{H}}^{1/2},\nonumber
\end{alignedat}
\]
	where in the last inequality we have used  that 
$$
	C_1\sqrt{\Lambda} -\frac{t \Lambda}{2}\leq \frac{C_1^2}{2t},\;\;\text{ for any}\;\; \Lambda>0
$$
	 and the following lemma:
\begin{lemma}[\cite{Rob}]\label{LemmaRob}
Let $C_1$, $C_2$ be positive and $M_0$, $M_1$ and $M_2$ be nonnegative. Assume there exist $C_3>0$ such that $M_0 \leq C_3M_1$ and $\delta_0>0$  such that 
$$
M_0 \leq e^{-C_1\delta}M_1 +e^{C_2\delta}M_2 
$$   
for every $\delta \geq \delta_0$. Then, there exits $C_0$ such that 
$$
M_0 \leq C_0M_1^{C_2/(C_1+C_2)}M_2^{C_1/(C_1+C_2)}.
$$ 
\end{lemma}

\end{proof}

	For the proof of Theorem~\ref{obser}, we will use the following result 
	concerning the property of Lebesgue density point for a measurable set in $\mathbb R$.

\begin{lemma}[\cite{PW1}, Proposition $2.1$] \label{diadicdecomposition}
	Let $E$ be a measurable set in $(0,T)$ with positive measure and  let  $l$ be  a density point of $E$. 
	Then, for each $\mu>1$, there is $l_1=l_1(\mu,E)$ in $(l, T)$ such that the sequence 
	$\{l_{m}\}_{m\geq1}$ defined as
\[
	l_{m+1}= l+\mu^{-m}\left(l_1-l\right),\ m=1, 2,\dots
\]
	satisfies
\begin{equation}\label{estimateintervals}
	|E\cap (l_{m+1}, l_m)|\ge \frac 13 \left(l_m-l_{m+1}\right),\ \forall m\ge 1.
\end{equation}
\end{lemma}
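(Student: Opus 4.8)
The plan is to exploit the Lebesgue density of $l$ to choose the first point $l_1$ so close to $l$ that the set $E$ fills a proportion at least $1-\epsilon$ of \emph{every} interval $(l,l+r)$ with $0<r\le l_1-l$, where the threshold $\epsilon=\epsilon(\mu)\in(0,1)$ is calibrated below; the annular estimate \eqref{estimateintervals} then drops out by subtracting the densities of two such nested intervals.

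First I would pass from the two-sided density to a one-sided one. Since $l$ is a Lebesgue density point of $E$, we have $|(l-r,l+r)\setminus E|=o(r)$ as $r\to0^+$, hence in particular $|(l,l+r)\setminus E|=o(r)$, so that
$$
\lim_{r\to0^+}\frac{|E\cap(l,l+r)|}{r}=1 .
$$
Given $\mu>1$, set $\epsilon:=\dfrac{2(\mu-1)}{3\mu}$; one checks at once that $0<\epsilon<1$ and that $(1-\epsilon)\mu=\dfrac{\mu+2}{3}$, so that $(1-\epsilon)\mu-1=\dfrac{\mu-1}{3}>0$. By the displayed limit there is $\delta>0$ such that $|E\cap(l,l+r)|\ge(1-\epsilon)\,r$ for all $r\in(0,\delta)$. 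Now pick any $l_1\in(l,T)$ with $l_1-l<\delta$ (possible since $l<T$) and write $d:=l_1-l\in(0,\delta)$; the sequence in the statement becomes $l_m=l+\mu^{-(m-1)}d$ for $m\ge1$, strictly decreasing to $l$, with $l_m-l_{m+1}=\mu^{-m}d\,(\mu-1)$.

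Finally I would verify \eqref{estimateintervals} by a short telescoping computation. For each $m\ge1$ we have $0<l_m-l=\mu^{-(m-1)}d<\delta$, so the choice of $\delta$ gives $|E\cap(l,l_m)|\ge(1-\epsilon)\,\mu^{-(m-1)}d$, whereas trivially $|E\cap(l,l_{m+1})|\le l_{m+1}-l=\mu^{-m}d$. Since $(l,l_m)=(l,l_{m+1}]\cup(l_{m+1},l_m)$ is a disjoint union, subtracting yields
$$
|E\cap(l_{m+1},l_m)|\ge(1-\epsilon)\mu^{-(m-1)}d-\mu^{-m}d=\mu^{-m}d\big((1-\epsilon)\mu-1\big)=\mu^{-m}d\,\frac{\mu-1}{3}=\frac13\,(l_m-l_{m+1}),
$$
which is exactly the claimed bound. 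The whole argument is elementary; the only points requiring a little care are the conversion to one-sided density and the precise calibration of $\epsilon$ against $\mu$ so that the constant $\tfrac13$ emerges (together with the fact that $(1-\epsilon)\mu-1>0$, which keeps the telescoping estimate non-vacuous). I do not expect any genuine obstacle here.
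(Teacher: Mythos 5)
Your proof is correct: the one-sided density reduction, the calibration $\epsilon=\frac{2(\mu-1)}{3\mu}$ giving $(1-\epsilon)\mu-1=\frac{\mu-1}{3}$, and the telescoping subtraction $|E\cap(l_{m+1},l_m)|\ge(1-\epsilon)\mu^{-(m-1)}d-\mu^{-m}d=\frac13(l_m-l_{m+1})$ all check out, using only that $l_m-l<\delta$ for every $m$. The paper itself gives no proof of this lemma (it is quoted from \cite{PW1}, Proposition 2.1), and your argument is essentially the standard one found there, so there is nothing to reconcile.
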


\begin{proof}[\textbf{Proof of Theorem~\ref{obser}}]
	For each $t\in (0,T)$, let us  define the slice 
\[
	\mathcal{M}_t=\{x\in \Omega : (x,t)\in\mathcal{M}\}
\]
	and
\[
	E=\left\{t\in (0,T); |\mathcal{M}_t |\ge {|\mathcal{M}|\over2T}\right\}.
\]
	From Fubini's Theorem, it follows that $\mathcal{M}_t\subset\Omega$ is measurable for a.e. $t\in (0,T)$, 
	$E$ is measurable in $(0,T)$ and 
\[
	|E|\ge {|\mathcal{M}|\over2|B_R(\xvec_0)|}
\quad\hbox{and}\quad\chi_E(t)\chi_{\mathcal{M}_t}(\xvec)\le \chi_{\mathcal{M}}(\xvec,t),\ \text{in}\ \Omega\times (0,T).
\]

For a.e. $t\in E$, we apply Proposition~\ref{estinterpo} to  $\mathcal M_t$ to find a constant $C=C(\Omega,R, |\mathcal{M}|/\left(T|B_R(\xvec_0)|\right))$ such that
\begin{equation}\label{estinterpo2}
	\|\zvec(\cdot,t)\|_{\bold H}\le \left(Ce^{\frac C{t-s}}\|\zvec(\cdot,t)\|_{\Lvec^1(\mathcal{M}_t)}\right)^{1/2}
	\|\zvec(\cdot,s)\|_{\bold{H}}^{1/2},
\end{equation}
	for $0\le s<t$.  
	
	Let  $l$ be any density point in $E$. For $\mu>1$ (to be chosen later), 
	we denote by $\{l_m\}_{m\geq1}$ the sequence associated to $l$ and $\mu$ as in
	Lemma \ref{diadicdecomposition}. For each $m\geq1$, we set 
$$
	\tau_m=l_{m+1}+{\left(l_m-l_{m+1}\right)\over6}
$$ 
	hence,
\[
	|E\cap (\tau_m,l_m)|=|E\cap (l_{m+1},l_m)|-|E\cap (l_{m+1},\tau_m)|\ge \frac{(l_m-l_{m+1})}{6}.
\]
Taking $s=l_{m+1}$ in \eqref{estinterpo2}, we get
\begin{equation}\label{estinterpo3}
	\|\zvec(\cdot,t)\|_{\bold H}\le 
	\left(Ce^{\frac C{l_m-l_{m+1}}}\|\zvec(\cdot,t)\|_{\Lvec^1(\mathcal{M}_t)}\right)^{1/2}
	\|\zvec(\cdot,l_{m+1})\|_{\bold{H}}^{1/2}, \ \ \text{for a.e.}\;\;t\in E\cap(\tau_m,l_m).
\end{equation}
	Integrating \eqref{estinterpo3} with respect to $t$ over $E\cap (\tau_m,l_m)$, we obtain
\[
	\|\zvec(\cdot,l_m)\|_{\bold H} \le 
	\left( C e^{\frac{C}{l_m-l_{m+1}}} \int_{l_{m+1}}^{l_m}\chi_E(t)
	\|\zvec(\cdot,t)\|_{\Lvec^1(\mathcal{M}_t)}\,dt\right)^{1/2}\|\zvec(l_{m+1})\|_{\bold H}^{1/2},
\]
	which implies that 
\[
	\|\zvec(\cdot,l_m)\|_{\bold H} \leq \epsilon \|\zvec(\cdot,l_{m+1})\|_{\bold H} 
	+\epsilon^{-1} C e^{\frac{C}{l_m-l_{m+1}}}\int_{l_{m+1}}^{l_m}\chi_E(t)\|\zvec(\cdot,t)\|_{\Lvec^1(\mathcal{M}_t)},
\]
	for any $\epsilon >0$. 

	Taking  $\epsilon=  e^{-\frac{1}{2(l_m-l_{m+1})}}$ in the above inequality, we have 
\begin{equation}\label{EstinterpoA0}
	e^{-\frac{C+{1\over2}}{l_m-l_{m+1}}} \|\zvec(\cdot,l_m)\|_{\bold H} 
	- e^{-\frac{C+1}{l_m-l_{m+1}}}  \|\zvec(\cdot,l_{m+1})\|_{\bold H}  
	\leq  C  \int_{l_{m+1}}^{l_m}\chi_E(t) \|\zvec(\cdot,t)\|_{\Lvec^1(\mathcal{M}_t)}\,dt.
\end{equation} 
	Finally, choosing $\mu=\frac{2(C+1)}{2C+1}$, where $C$ is any constant for which inequality \eqref{EstinterpoA0} holds, we readly obtain 
 \begin{align}\label{E:1}
	e^{-\frac{C+{1\over2}}{l_m-l_{m+1}}}\|\zvec(\cdot,l_m)\|_{\bold H}&- e^{-\frac{C+{1\over2}}{l_{m+1}-l_{m+2}}}
	\|\zvec(\cdot,l_{m+1)}\|_{\bold H} \nonumber \\
	& \le \  C\int_{l_{m+1}}^{l_{m}}\chi_E(t) \|\zvec(\cdot,t)\|_{\Lvec^1(\mathcal M_t)}\,dt, \ \ \forall m\geq1,
\end{align} 
because $\mu( l_{m+1} -l_{m+2}) =  l_{m} -l_{m+1},\;\;\text{for all}\;\;m\geq1.$
	
	Finally, adding the telescoping series in \eqref{E:1} from $m=1$ to $+\infty$, we get the observability inequality
\begin{equation*}
\|\zvec(\cdot,T)\|_{\bold H}\le C\int_{\mathcal{M}\cap(\Omega\times [l,l_1])}|\zvec(\xvec,t)|\,d\xvec dt,
\end{equation*}
with some constant $C = C(N,R,\Omega, \mathcal{M}, T)>0$. 
 \end{proof}

\bigskip
\section{Applications}\label{applications}


\subsection{Shape optimization problems} 
	As a direct and interesting application of Theorem~\ref{4241}, we analyze the following shape 
	optimization problem formulated in  \cite{PTZ}.

	Let $\{\beta_j^{\nu}\}_{j\in\mathbb N}$ be a sequence of independent real random variables on 
	a probability space $(\mbox{X},\mathcal{F},\mathbb P)$ having mean equal to $0$, variance equal 
	to $1$, and a super exponential decay (for instance, independent Gaussian or Bernoulli random 
	variables, see \cite[Assumption (3.1)]{BT} for more details). For every $\nu\in\mbox{X}$, the 
	solution of \eqref{eq:stokes} corresponding to the initial datum 
\begin{equation}\label{EspecData}
	\zvec^\nu_0=\sum_{j\geq1}\beta_j^{\nu}a_j\evec_j,\;\;\text{with}\;\;\{a_j\}_{j\geq1}\in\ell^2,
\end{equation}
	is given by 
\begin{equation}\label{EspecSol}
	\zvec^{\nu}(\cdot,t)=\sum_{j\geq1}\beta_j^{\nu}a_je^{-t\lambda_j}\evec_j.
\end{equation}
	Given $L \in (0,1)$, we define the set of admissible designs
\[
	\mathcal U_L=\Big\{\chi_\omega\in L^\infty(\Omega;\{0,1\}):
	\omega\subset\Omega \;\text{is a measurable subset of measure}\;\;|\omega|=L|\Omega|\Big\}.
\]
	For each $\chi_\omega\in\mathcal U_L$, we then define the randomized observability constant by
\[
 	C_{T,rand}(\chi_\omega)=\inf_{||\zvec^{\nu}(T)||=1}\mathbb{E} \int_0^T\int_\omega |\zvec^{\nu}(x,t)|^2d\xvec dt,
\]

Using \eqref{EspecSol}, the properties of random variables $\beta_j^{\nu}$,  and the change of variable $b_j=a_je^{-T\lambda_j}$, we deduce that

\[
 	C_{T,rand}(\chi_\omega)=\inf_{\sum^{\infty}_{j=1}|b_j|^2=1}\mathbb{E} \int_0^T\int_\omega
 	\left|\sum_{j\geq1}\beta_j^{\nu}b_je^{t\lambda_j}\evec_j(\xvec)\right|^2\,d\xvec dt,
\]
	where $\mathbb E$ is the expectation over the space $\mathbb X$ with respect to the probability measure $\mathbb P$. 		
	
	From Fubini's theorem and the independence of the random variables $\{\beta_j^{\nu}\}_{j\in\mathbb N}$,
	a simple computation gives  
$$
	C_{T,rand}(\chi_\omega)=\inf _{j\geq1}\;\;\frac{e^{2T\lambda_j}-1}{2\lambda_j}\int_\omega |\evec_j(\xvec)|^2\,d\xvec.
$$

	We now consider the optimal design problem of maximizing the randomized observability constant  
	$C_{T,rand}(\chi_\omega)$ over the set of admissible designs $\mathcal U_L$. In other words, we study the problem
\begin{equation}\label{OPTDP}
	(P^T):\;\;\;\;\;\;\sup_{\chi_\omega\in\mathcal U_L}C_{T,rand}(\chi_\omega)=\sup_{\chi_\omega\in\mathcal U_L}\;
	\inf _{j\geq1}\;\; \frac{e^{2T\lambda_j}-1}{2\lambda_j}  \int_\omega |\evec_j(\xvec)|^2\,d\xvec.
\end{equation}
	The optimal shape design problem \eqref{OPTDP} models the best sensor shape and location problem for the control of the
	Stokes system \eqref{eq:stokes}.

We have the following result:
\begin{theorem} \label{TheoOpt1}
	The problem $(P^T)$ has a unique solution. 
\end{theorem}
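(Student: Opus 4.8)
The plan is to follow the now-standard scheme of Privat–Trélat–Zuazua for spectral shape optimization problems, adapting it via the measurable-set spectral inequality of Theorem~\ref{4241}. First I would reformulate the problem: setting $\gamma_j(T)=\frac{e^{2T\lambda_j}-1}{2\lambda_j}$, the functional to maximize over $\chi_\omega\in\mathcal U_L$ is $J(\chi_\omega)=\inf_{j\geq 1}\gamma_j(T)\int_\omega|\evec_j(\xvec)|^2\,d\xvec$. Since $\mathcal U_L$ is not closed for any reasonable topology, the natural first step is to pass to the convex relaxation: replace $\mathcal U_L$ by its weak-$\ast$ closure in $L^\infty(\Omega;[0,1])$, namely $\overline{\mathcal U_L}=\{a\in L^\infty(\Omega;[0,1]):\int_\Omega a=L|\Omega|\}$, and consider the relaxed functional $\widetilde J(a)=\inf_{j\geq 1}\gamma_j(T)\int_\Omega a(\xvec)|\evec_j(\xvec)|^2\,d\xvec$. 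As an infimum of weak-$\ast$ continuous linear functionals, $\widetilde J$ is upper semicontinuous, and $\overline{\mathcal U_L}$ is weak-$\ast$ compact, so the relaxed problem admits a maximizer $a^\ast$.

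The heart of the argument, and the place where Theorem~\ref{4241} enters, is to show that every relaxed maximizer is in fact a characteristic function, i.e.\ $a^\ast\in\mathcal U_L$, which simultaneously proves existence for $(P^T)$ and identifies the two problems. This is the \emph{no-gap}/bang-bang property. I would argue by contradiction: if $a^\ast$ takes values in $(0,1)$ on a set $\omega_0$ of positive measure, one constructs a perturbation $a^\ast+t h$ (with $h$ supported in $\omega_0$, $\int_\Omega h=0$, $\|h\|_\infty$ small) that strictly increases $\widetilde J$. The subtlety is that $\widetilde J$ is an infimum, so one must control which indices $j$ are \emph{active}, i.e.\ realize (or nearly realize) the infimum. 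Here one uses that $\gamma_j(T)\to+\infty$ like $e^{2T\lambda_j}/(2\lambda_j)$ while $\int_\Omega a^\ast|\evec_j|^2\le\|\evec_j\|_{\Lvec^2(\Omega)}^2$ is bounded; more importantly, Theorem~\ref{4241} (equivalently the open-set version Theorem~\ref{5025}) forces $\int_{\omega}|\evec_j|^2$ to be bounded below only in terms of $e^{-C\sqrt{\lambda_j}}$, whereas $\gamma_j(T)$ grows like $e^{2T\lambda_j}$; hence $\gamma_j(T)\int_{\omega}|\evec_j|^2\to+\infty$ for any fixed admissible $\omega$, so the infimum defining $J(\chi_\omega)$ is attained on a \emph{finite} set of indices $\{1,\dots,N_0\}$ (uniformly for $a$ in a neighbourhood in $\overline{\mathcal U_L}$). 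With only finitely many active constraints, the perturbation analysis becomes a finite-dimensional linear-programming argument: one chooses $h$ to increase $\min_{1\le j\le N_0}\gamma_j(T)\int_\Omega a^\ast|\evec_j|^2$, which is possible unless the $|\evec_j|^2$, $1\le j\le N_0$, are linearly dependent in a way that pins $a^\ast$ — and this is ruled out because finite sums of eigenfunctions cannot vanish on a set of positive measure (a consequence of the analyticity/unique-continuation already established, or directly of Theorem~\ref{4241} with a suitable choice of $a_j$).

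Having shown any relaxed maximizer lies in $\mathcal U_L$, existence for $(P^T)$ follows, and $\sup_{(P^T)}=\max_{\text{relaxed}}$. For \textbf{uniqueness} I would exploit strict concavity on the active block: on the finite-dimensional cone spanned by directions $h$ with support meeting $\omega_0$, the functional $a\mapsto\min_{1\le j\le N_0}\gamma_j(T)\int_\Omega a|\evec_j|^2$ is concave, and the constraint that maximizers be $\{0,1\}$-valued with fixed mass, combined with the linear independence of $\{|\evec_j|^2\}_{1\le j\le N_0}$ on every positive-measure set, forces the optimal $\omega$ to be determined as a super-level set $\{\xvec:\sum_{j=1}^{N_0}c_j|\evec_j(\xvec)|^2>\tau\}$ for Lagrange multipliers $c_j\ge 0$ and a threshold $\tau$ fixed by $|\omega|=L|\Omega|$; since the function $\sum c_j|\evec_j|^2$ is real-analytic its level sets have measure zero, so the super-level set is uniquely pinned down, and the multipliers themselves are unique by the strict concavity of the min-functional at the optimum. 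Assembling these gives the unique solution of $(P^T)$.

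The step I expect to be the main obstacle is establishing that the active index set is finite \emph{uniformly} near the maximizer and then extracting a genuine strict-increase direction $h$: one must verify that the lower bound $\int_\omega|\evec_j|^2\gtrsim e^{-C\sqrt{\lambda_j}}$ coming from Theorem~\ref{5025}/Theorem~\ref{4241} is strong enough against the precise growth $\gamma_j(T)\sim e^{2T\lambda_j}/(2\lambda_j)$ for \emph{all} $T>0$ (it is, since $\sqrt{\lambda_j}=o(\lambda_j)$), and that the perturbation can be chosen admissible (mean-zero, keeping $a^\ast+th\in[0,1]$) while strictly raising every currently-active coordinate — a convex-feasibility argument that fails exactly on the degenerate configurations excluded by unique continuation. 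The remaining points (relaxation, compactness, semicontinuity, existence of a relaxed maximizer, analyticity of level sets) are routine given the tools already in the paper.
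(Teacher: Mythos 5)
Your proposal uses exactly the two ingredients the paper uses, but packages them differently. The paper's proof is short: it verifies the two hypotheses of \cite[Theorem 1]{PTZ} --- namely (i) no nontrivial combination $\sum_{j\le m}\beta_j|\evec_j|^2$ with $\beta_j\ge 0$ can equal a constant a.e.\ on a set of positive measure (a consequence of analyticity of the Stokes eigenfunctions), and (ii) $\liminf_{j\to\infty}\frac{e^{2T\lambda_j}-1}{2\lambda_j}\int_\Omega a|\evec_j|^2\,d\xvec=+\infty$ for every admissible density $a$, obtained from Theorem~\ref{4241} via $a\ge \eps\chi_E$ and the lower bound $\int_E|\evec_j|^2\gtrsim e^{-C\sqrt{\lambda_j}}$, which is beaten by $e^{2T\lambda_j}$ --- and then cites PTZ for both existence and uniqueness. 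Your verification of (ii) is precisely the paper's computation, and your appeal to analyticity/unique continuation is the paper's (i). What you do differently is to re-derive the PTZ machinery (relaxation, finiteness of the active set, bang-bang property of relaxed maximizers, level-set characterization) rather than cite it; this buys self-containedness but nothing else, and it is where the soft spots are.

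Two steps of your re-derivation are not right as written. First, uniqueness: a minimum of finitely many linear (weak-$\ast$ continuous) functionals is concave but never strictly concave, so ``the multipliers are unique by the strict concavity of the min-functional'' is unfounded; two maximizers could a priori carry different multipliers and hence different super-level sets. The clean fix, with pieces you already claim, is: $\widetilde J$ is concave, so the set of relaxed maximizers is convex; if $\chi_{\omega_1}\neq\chi_{\omega_2}$ were two maximizers, then $\tfrac12(\chi_{\omega_1}+\chi_{\omega_2})$ would be a relaxed maximizer taking the value $\tfrac12$ on a set of positive measure, contradicting your statement that every relaxed maximizer is a characteristic function (this is essentially how the cited result handles uniqueness). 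Second, the nondegeneracy needed in your perturbation (Farkas/separation) step is not ``finite sums of eigenfunctions cannot vanish on a set of positive measure'' but condition (i) above: the obstruction to finding an admissible direction $h$ is the existence of $\beta_j\ge0$, not all zero, and $\mu\in\mathbb R$ with $\sum\beta_j|\evec_j|^2=\mu$ a.e.\ on a positive-measure subset of $\{0<a^\ast<1\}$, and it is \emph{constancy} of such a nonnegative combination that must be excluded (analyticity plus the Dirichlet boundary condition does exclude it, forcing $\mu=0$ and then $\beta_j=0$). Finally, note that the uniform finiteness of the active set is only needed along $L^\infty$-small perturbations $a^\ast+th\ge(\eps-t)\chi_E$, not on weak-$\ast$ neighbourhoods, where the lower bound from Theorem~\ref{4241} is not stable; with these repairs your argument is a valid, if longer, substitute for the citation of \cite[Theorem 1]{PTZ}.
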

\begin{proof}
We only have to check the following two conditions:

\null

{\item
\begin{itemize}
\item[i.]
If there exists $E \subset \Omega$ of positive Lebesgue measure, an integer $m\in\mathbb{N}^{*}$,  $\beta_1, \ldots, \beta_m \in \mathbb{R}^+$, and $C\geq0$ such that
$\sum^{m}_{j=1} \beta_j |\evec _j(\xvec)|^2=C$
almost everywhere on 
$E$, then there must hold $C =0$ and $\beta_1= \beta_2 =\ldots =\beta_m = 0$.

\null

\item[ii.]  For every $a\in L^\infty (\Omega; [0,1])$ such that $\int_{\Omega}a(\xvec)\,d\xvec=L|\Omega|$,
one has
\[
\liminf_{j\rightarrow +\infty}\;\frac{e^{2T\lambda_j}-1}{2\lambda_j}  \int_\Omega a(x)|\evec_j(\xvec)|^2\,d\xvec
>\frac{e^{2T\lambda_1}-1}{2\lambda_1}.
\]
\end{itemize}
}

By the analyticity of the eigenfunctions of Stokes system with homogeneous Dirichlet boundary conditions, it is not difficult to show that the first condition holds. 

For the second condition, notice that there exists $\epsilon>0$ and $E \subset \Omega$ of positive measure such that $a\geq \epsilon \chi_{E}$ and 
$$
\int_{\Omega} a(x)|\evec_j(\xvec)|^2\,d\xvec \geq \epsilon \int_{E}|\evec_j(\xvec)|^2\,d\xvec.
$$
From Theorem \ref{4241}, we easily see that 
$$
\liminf_{j\rightarrow +\infty}\;\frac{e^{2T\lambda_j}-1}{2\lambda_j}  \int_\Omega a(x)|\evec_j(\xvec)|^2\,d\xvec = + \infty.
$$

From \cite[Theorem 1]{PTZ}, it follows that problem $(P^T)$ has a unique solution. 

\end{proof}

\begin{remark}
The optimal set given by Theorem \ref{TheoOpt1} is open and semi-analytic\footnote{Here, 
	it is understood that the optimal set is unique up to the set of zero measure.
	A subset of a real analytic finite-dimensional manifold is said to be semi-analytic if it can be written 
	in terms of equalities and inequalities of real analytic functions.}. This  follows from the fact that the eigenfunctions of the Stokes system with homogeneous Dirichlet boundary conditions are analytic. 
\end{remark}

\begin{remark}
A proof of Theorem \ref{TheoOpt1} when $\Omega$ is the unit disk of $\mathbb{R}^2$ can be found in \cite{PTZ}. However, such proof relies on an explicity knowledge of the eigenfunctions of the Stokes operator, which of course can not be extended to general domains. Notice that to prove Theorem \ref{TheoOpt1}, in the general case, the key point is to obtain an observability inequalities with observations over measurable sets of positive measure as in Theorem \ref{4241}.
\end{remark}

\bigskip

\subsection{Null controllability for Stokes system with bounded controls}
	Let $\om$ be a non-empty open subset of $\Om$ and consider the following controlled Stokes system
\begin{equation}\label{stokes}
	\left |   
		\begin{array}{lcl}
			\uvec_t - \Delta \uvec  +\nabla p = \vvec\chi_\om &  \mbox{in}&  Q,  \\
			\Div \uvec  = 0 &  \mbox{in}&   Q,  \\
			\uvec = \ovec & \mbox{on}& \Sigma, \\
			\uvec(\cdot,0) = \uvec_0 & \mbox{in}& \Omega.
		\end{array}
	\right. 
\end{equation}

	It is well known that for any $ T > 0$, $\uvec_0\in \Hvec$, and $ \vvec\in \Lvec^2(\omega\times(0,T))$, there 
	exists exactly one solution $(\uvec,p)$ to the Stokes equations~\eqref{stokes} with
$$
	 \uvec\in C^0\left([0, T];\Hvec\right)\cap L^2\left(0, T;\Vvec\right),~p \in L^2(0,T; U),
$$
	where  
\[
	\begin{alignedat}{2}
		&U:=\dis \left\{\psi \in H^1(\Om); \int_\Om \psi(\xvec)\,d\xvec=0\right\}.
    	\end{alignedat}
\]

	In the context of the Stokes system \eqref{stokes}, for $1\leq p\leq \infty$, the {\it $\Lvec^p$- null controllability} problem at time $T$ reads as
	follows:
{\it
\begin{quote}
	 For any $\uvec_0 \in \Hvec$, find a control $ \vvec\in \Lvec^p(\omega\times(0,T))$ such that the associated solution 
	to~\eqref{stokes} satisfies
\end{quote}
\begin{equation}\label{null_condition_stokes}
	\uvec(\xvec,T) = 0\quad\hbox{in}\quad\Omega.
\end{equation}
}

The following result is well-known.
\begin{theorem}\label{th-2}
For any non-empty open subset $\omega$ of  $\Omega$ and any $T>0$, the Stokes system~\eqref{stokes} is $\Lvec^2$-null controllable.
\end{theorem}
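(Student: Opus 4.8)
The plan is to deduce $\Lvec^2$-null controllability from the duality between controllability and observability. First I would recall the classical Hilbert Uniqueness Method (HUM) setup: since the control operator $\vvec\mapsto\vvec\chi_\omega$ is bounded from $\Lvec^2(\omega\times(0,T))$ into $\Lvec^2(0,T;\Hvec)$ and the Stokes semigroup $e^{-t\Avec}$ is strongly continuous on $\Hvec$, the $\Lvec^2$-null controllability of \eqref{stokes} is \emph{equivalent} to the following observability inequality for the adjoint system: there exists $C_{\mathrm{obs}}>0$ such that
\be\label{L2obs}
	\|\zvec(\cdot,T)\|_{\bold H}^2 \le C_{\mathrm{obs}}\int_0^T\!\!\int_\omega |\zvec(\xvec,t)|^2\,d\xvec\,dt
\ee
holds for every $\zvec_0\in\bold H$, where $\zvec$ solves \eqref{eq:stokes} (the adjoint of \eqref{stokes} is again a backward Stokes system, which reduces to \eqref{eq:stokes} after time reversal and using self-adjointness of $\Avec$). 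So it suffices to establish \eqref{L2obs} when $\omega$ is a non-empty open set.

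Next I would obtain \eqref{L2obs} from the spectral inequality \eqref{5021} of Theorem~\ref{5025} together with the exponential decay of the high-frequency component, via the now-standard Lebeau--Robbiano iteration (telescoping series) method. Concretely: given $\zvec_0\in\bold H$, split $\zvec=\zvec_\Lambda+\zvec_\Lambda^\perp$ along $\Hvec_\Lambda=\mathrm{span}\{\evec_j:\lambda_j\le\Lambda\}$ exactly as in the proof of Proposition~\ref{estinterpo}, so that $\|\zvec_\Lambda^\perp(\cdot,t)\|_{\bold H}\le e^{-\Lambda t}\|\zvec_0\|_{\bold H}$. Applying \eqref{5021} to the (time-frozen) finite sum $\zvec_\Lambda(\cdot,t)$ and integrating in $t$ over a subinterval yields a low-frequency observation estimate with constant $Ce^{C\sqrt\Lambda}$; combined with the decay estimate this produces an interpolation inequality of the form $\|\zvec(\cdot,t)\|_{\bold H}\le \big(Ce^{C/(t-s)}\|\zvec\|_{L^2((s,t)\times\omega)}\big)^{1/2}\|\zvec(\cdot,s)\|_{\bold H}^{1/2}$, i.e. the $L^2$-analogue of Proposition~\ref{estinterpo} (indeed this is simpler than the measurable-set case, since one may use a fixed interval rather than invoking Lemma~\ref{diadicdecomposition}). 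Feeding this into a dyadic time decomposition $(T/2^{m+1},T/2^m)$ and summing the resulting telescoping series, with the appropriate choice of the balancing parameter, gives \eqref{L2obs}.

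Alternatively, one may simply cite the literature: \eqref{L2obs} for open $\omega$ is precisely the content of the global Carleman estimates for the Stokes system in \cite{FI} (see also \cite{CG,FGIP,IPY}), or of \cite[Theorem 1.1]{CL1}. In either route the main obstacle is the one already handled in \cite{CL1} and recorded as Theorem~\ref{5025} here, namely the spectral inequality with the sharp $e^{C\sqrt\Lambda}$ cost for the Stokes eigenfunctions, which must deal with the pressure term; once that is granted, the passage to \eqref{L2obs} and hence to null controllability is routine functional analysis and a telescoping-series argument. I would therefore present the proof as: (1) recall the observability/controllability duality; (2) note \eqref{L2obs} follows from Theorem~\ref{5025} by the Lebeau--Robbiano method (or cite \cite{FI,CL1}); (3) conclude.
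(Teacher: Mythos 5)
Your proposal is correct; note, however, that the paper itself offers no proof of Theorem~\ref{th-2} at all --- it simply records the result as well known and refers to \cite{CL1,FGIP,FI}, which is exactly your ``alternative'' route. Your primary route is a faithful reconstruction of the argument in \cite{CL1}: duality/HUM reduces $\Lvec^2$-null controllability of \eqref{stokes} to the $L^2$ observability inequality for the adjoint system (which, after time reversal and self-adjointness of the Stokes operator, is again \eqref{eq:stokes}), and that inequality follows from the spectral inequality \eqref{5021} of Theorem~\ref{5025} combined with the decay $\|\zvec_\Lambda^\perp(\cdot,t)\|_{\Hvec}\le e^{-\Lambda t}\|\zvec_0\|_{\Hvec}$ through the Lebeau--Robbiano/telescoping scheme --- the same mechanism the paper uses later, in the harder measurable-set setting, in Proposition~\ref{estinterpo} and the proof of Theorem~\ref{obser}. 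The only caveat is that the Carleman-based references \cite{FI,FGIP,CG} prove observability directly rather than via Theorem~\ref{5025}, so your closing claim that the spectral inequality is ``the main obstacle'' in those routes overstates their dependence on it; but since your Lebeau--Robbiano sketch is self-contained given Theorem~\ref{5025}, this does not affect the validity of the proof.
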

	\noindent For the proof, we refer the reader to \cite{CL1,FGIP,FI}.

\

 In practice it would be interesting to take the control steering the solution of the Stokes system to rest to be in $\Lvec^\infty(\omega\times(0,T))$. Nevertheless,  it is not clear how to construct $\Lvec^\infty(\omega\times(0,T))$ controls from $\Lvec^2(\omega\times(0,T))$ controls. Notice that for the case of the heat equation this is always possible since one can use  local regularity results (for more details, see \cite{Manolo}), which is no longer the case for the Stokes system.

From Theorem \ref{obser} we are able to deduce a null controllability for Stokes system with \linebreak $\Lvec^\infty$-controls. 
	More precisely, we have:
\begin{theorem}\label{control_Linfty} For any non-empty open subset $\omega$ of  $\Omega$ and any $T>0$,	the Stokes system~\eqref{stokes} is $\Lvec^\infty$-null-controllable. 
\end{theorem}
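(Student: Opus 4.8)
The plan is to derive the $\Lvec^\infty$-null controllability of \eqref{stokes} from the $L^1$-observability inequality \eqref{Obsinequality} by a standard duality argument, realized through a penalized (Fenchel--Rockafellar) minimization in which the penalty is measured in the $\Lvec^\infty(\mathcal M)$ norm and the dual functional involves the $\Lvec^1$ norm of the adjoint state on $\mathcal M$. Concretely, fix $\uvec_0\in\bold H$ and let $\mathcal M\subset\omega\times(0,T)$ be any measurable set of positive measure (take $\mathcal M=\omega'\times(0,T)$ with $\omega'\subset\subset\omega$ a ball, which after a translation fits the geometric hypotheses of Theorem~\ref{obser}; or invoke Remark~\ref{rmq:tecnical}). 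The adjoint system to \eqref{stokes} is the backward Stokes system; by time reversal $\mat{\varphi}(\cdot,t)=\zvec(\cdot,T-t)$ where $\zvec$ solves \eqref{eq:stokes} with data $\zvec_0=\mat{\varphi}_T\in\bold H$, so \eqref{Obsinequality} says exactly
\[
	\|\mat{\varphi}(\cdot,0)\|_{\bold H}\leq C_{obs}\int_{\mathcal M}|\mat{\varphi}(\xvec,t)|\,d\xvec\,dt,\qquad\forall\,\mat{\varphi}_T\in\bold H .
\]

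First I would introduce, for each $\varepsilon>0$, the functional
\[
	J_\varepsilon(\mat{\varphi}_T)=\frac{1}{2}\left(\int_{\mathcal M}|\mat{\varphi}(\xvec,t)|\,d\xvec\,dt\right)^2
	+\varepsilon\|\mat{\varphi}_T\|_{\bold H}+\langle \uvec_0,\mat{\varphi}(\cdot,0)\rangle_{\bold H}
\]
over $\mat{\varphi}_T\in\bold H$. This functional is continuous, strictly convex once restricted appropriately, and — this is where \eqref{Obsinequality} enters — coercive: from the observability inequality $\langle\uvec_0,\mat{\varphi}(\cdot,0)\rangle_{\bold H}\geq -\|\uvec_0\|_{\bold H}\|\mat{\varphi}(\cdot,0)\|_{\bold H}\geq -C_{obs}\|\uvec_0\|_{\bold H}\int_{\mathcal M}|\mat{\varphi}|$, so $J_\varepsilon(\mat{\varphi}_T)\to+\infty$ as $\int_{\mathcal M}|\mat{\varphi}|\to\infty$, while the term $\varepsilon\|\mat{\varphi}_T\|_{\bold H}$ controls the component of $\mat{\varphi}_T$ that does not see $\mathcal M$ (there is none, by backward uniqueness, but the $\varepsilon$-term makes this quantitative). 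Hence $J_\varepsilon$ attains a minimum at some $\hat{\mat{\varphi}}_T^\varepsilon$. Writing the Euler--Lagrange equation at the minimizer and using that the subdifferential of $\mat\psi\mapsto\tfrac12(\int_{\mathcal M}|\mat\psi|)^2$ consists of elements of the form $\big(\int_{\mathcal M}|\hat{\mat\varphi}^\varepsilon|\big)\,\mat g$ with $\mat g\in\Lvec^\infty(\mathcal M)$, $\|\mat g\|_{\Lvec^\infty(\mathcal M)}\leq 1$, $\mat g=\hat{\mat\varphi}^\varepsilon/|\hat{\mat\varphi}^\varepsilon|$ where $\hat{\mat\varphi}^\varepsilon\neq 0$, one reads off a control $\vvec_\varepsilon:=\big(\int_{\mathcal M}|\hat{\mat\varphi}^\varepsilon|\big)\,\mat g\,\chi_{\mathcal M}\in\Lvec^\infty(\omega\times(0,T))$ whose associated solution $\uvec_\varepsilon$ of \eqref{stokes} satisfies $\|\uvec_\varepsilon(\cdot,T)\|_{\bold H}\leq\varepsilon$, and whose norm is bounded by $\|\vvec_\varepsilon\|_{\Lvec^\infty}\leq\int_{\mathcal M}|\hat{\mat\varphi}^\varepsilon|\leq C_{obs}\|\uvec_0\|_{\bold H}$, uniformly in $\varepsilon$ (the latter bound coming again from \eqref{Obsinequality} together with $J_\varepsilon(\hat{\mat\varphi}^\varepsilon)\leq J_\varepsilon(0)=0$).

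Then I would pass to the limit $\varepsilon\to0^+$: the uniform bound $\|\vvec_\varepsilon\|_{\Lvec^\infty(\omega\times(0,T))}\leq C_{obs}\|\uvec_0\|_{\bold H}$ gives, along a subsequence, weak-$*$ convergence $\vvec_\varepsilon\cf\vvec$ in $\Lvec^\infty(\omega\times(0,T))$, hence in particular weak convergence in $\Lvec^2(\omega\times(0,T))$; by continuous dependence of solutions of \eqref{stokes} on the control (the solution map $\Lvec^2(\omega\times(0,T))\to C^0([0,T];\bold H)$ is linear continuous, hence weak-to-weak continuous into, say, $\bold H$ at time $T$), $\uvec_\varepsilon(\cdot,T)\cf\uvec(\cdot,T)$ where $\uvec$ solves \eqref{stokes} with control $\vvec$; since $\|\uvec_\varepsilon(\cdot,T)\|_{\bold H}\leq\varepsilon\to0$ we get $\uvec(\cdot,T)=0$, i.e. \eqref{null_condition_stokes} holds, and $\vvec\in\Lvec^\infty(\omega\times(0,T))$ with $\|\vvec\|_{\Lvec^\infty}\leq C_{obs}\|\uvec_0\|_{\bold H}$ by weak-$*$ lower semicontinuity of the norm. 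This proves $\Lvec^\infty$-null controllability, in fact with the explicit cost estimate $\|\vvec\|_{\Lvec^\infty(\omega\times(0,T))}\leq C_{obs}\|\uvec_0\|_{\bold H}$.

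The only genuinely delicate point is the characterization of the subdifferential of the square of the $\Lvec^1(\mathcal M)$-norm and the resulting identification of $\vvec_\varepsilon$ as an $\Lvec^\infty$ function — that is, the fact that minimizing an $L^1$-type dual functional produces an $L^\infty$ primal control; this is precisely the mechanism that distinguishes this argument from the classical $\Lvec^2$ HUM construction and is the reason one needs the $L^1$-observability inequality of Theorem~\ref{obser} rather than an $L^2$ one. Everything else — coercivity, existence of minimizers, and the limiting procedure — is routine functional analysis, and for these standard duality manipulations producing $\Lvec^\infty$ controls from $\Lvec^1$-observability one may refer to the analogous treatment for the heat equation.
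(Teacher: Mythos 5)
Your argument is correct and is exactly the route the paper takes: its proof of Theorem~\ref{control_Linfty} consists of the single remark that the result ``follows from the duality between observability and controllability and the $\Lvec^1$-observability inequality \eqref{Obsinequality}'', and your penalized variational construction (minimizing $\frac12\bigl(\int_{\mathcal M}|\mat{\varphi}|\bigr)^2+\varepsilon\|\mat{\varphi}_T\|_{\bold H}+\langle\uvec_0,\mat{\varphi}(\cdot,0)\rangle$, reading the $\Lvec^\infty$ control off the subdifferential, and letting $\varepsilon\to0$) is the standard realization of that duality, with the uniform cost bound $\|\vvec\|_{\Lvec^\infty}\leq C\,C_{obs}\|\uvec_0\|_{\bold H}$. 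Only cosmetic quibbles: the bound from $J_\varepsilon(\hat{\mat{\varphi}}^\varepsilon_T)\leq J_\varepsilon(0)=0$ gives $2C_{obs}\|\uvec_0\|_{\bold H}$ rather than $C_{obs}\|\uvec_0\|_{\bold H}$, and the case where the minimizer is $\ovec$ should be handled by the variational inequality at $\ovec$; neither affects the conclusion.
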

\begin{proof}
The proof follows from the duality between observability and controllability and the  $\Lvec^1$-observability inequality \eqref{Obsinequality}.
\end{proof}

	The observability inequality stablished in Theorem \ref{obser} allow us to conclude 
	stronger controllability properties for the Stokes system \eqref{stokes}. In fact  it is possible to control the Stokes system with $\Lvec^\infty$-controls supported in any measurable set of positive measure:
	
\begin{theorem}\label{bang_bang_stokes_0}
 For any $T>0$ and any measurable set of positive measure $\gamma\subset \Omega\times[0,T]$,
	the Stokes system~\eqref{stokes} is $\Lvec^\infty$-null controllable with control supported in $\gamma$. 
\end{theorem}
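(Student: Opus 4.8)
The plan is to derive Theorem~\ref{bang_bang_stokes_0} from the $L^1$-observability inequality \eqref{Obsinequality} of Theorem~\ref{obser} by the standard duality between observability and controllability, exactly as in the proof of Theorem~\ref{control_Linfty} but now keeping track of the fact that the observation region $\mathcal{M}=\gamma$ is an arbitrary measurable set of positive measure. First I would reduce to the situation covered by Theorem~\ref{obser}: by Remark~\ref{rmq:tecnical}, after possibly shrinking $\gamma$ to a subset of comparable measure (which only makes the controllability statement stronger, since a control supported in a smaller set is also supported in $\gamma$), we may assume $\gamma\subset B_R(\xvec_0)\times(0,T)$ with $B_{4R}(\xvec_0)\subset\Omega$, so that \eqref{Obsinequality} applies with $\mathcal{M}=\gamma$.

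Next I would set up the adjoint problem. For $\zvec_T\in\bold{H}$, let $\zvec$ solve the backward Stokes system (which, after the time reversal $t\mapsto T-t$, is again \eqref{eq:stokes}); then \eqref{Obsinequality} reads $\|\zvec(\cdot,0)\|_{\bold H}\le C_{obs}\int_\gamma|\zvec|\,d\xvec dt$, i.e. the final-state norm is dominated by the $L^1(\gamma)$-norm of the adjoint trajectory. I would then define, for $\uvec_0\in\bold H$ fixed, the functional
\[
	J(\zvec_T)=\int_\gamma|\zvec(\xvec,t)|\,d\xvec dt+\langle\uvec_0,\zvec(\cdot,0)\rangle_{\bold H}
\]
on $\bold H$. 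The observability inequality gives the coercivity estimate $J(\zvec_T)\ge \int_\gamma|\zvec|\,d\xvec dt-\|\uvec_0\|_{\bold H}\|\zvec(\cdot,0)\|_{\bold H}\ge (1-C_{obs}\|\uvec_0\|_{\bold H})\int_\gamma|\zvec|$... more carefully, $J$ is convex, continuous, and coercive on $\bold H$ because $\|\zvec(\cdot,0)\|_{\bold H}$ is equivalent by the semigroup to a norm controlled by $\|\zvec_T\|_{\bold H}$ from above and by $\int_\gamma|\zvec|$ from below, so $J(\zvec_T)\to+\infty$ as $\|\zvec_T\|_{\bold H}\to\infty$. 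Hence $J$ attains a minimum at some $\hat\zvec_T$; writing the Euler--Lagrange equation and using that the subdifferential of $\varphi\mapsto\int_\gamma|\varphi|$ consists of functions $g$ with $\|g\|_{L^\infty(\gamma)}\le 1$ and $g=\hat\zvec/|\hat\zvec|$ where $\hat\zvec\ne0$, one obtains $\vvec:=-g\,\chi_\gamma\in\bold L^\infty$, supported in $\gamma$, with $\|\vvec\|_{\bold L^\infty(\gamma)}\le 1$ after rescaling by $\|\uvec_0\|_{\bold H}$, such that the solution of \eqref{stokes} with this control satisfies $\uvec(\cdot,T)=0$. This is the classical Fenchel--Rockafellar / variational argument (as in the references \cite{AEWZ,WZ1} for the heat equation and \cite{CL1} for Stokes); the $L^1$-observability on the right produces an $L^\infty$-control, and the support of the minimizer's subgradient lies in $\gamma$.

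The only genuine subtlety — and the step I expect to need the most care — is the handedness of the time variable and the measurability of the slices: the adjoint trajectory must be observed on the time-reversed set $\tilde\gamma=\{(\xvec,T-t):(\xvec,t)\in\gamma\}$, which is again measurable of the same positive measure, so Theorem~\ref{obser} applies verbatim to it; and one must check that the Leray projection of the $\bold L^\infty$ multiplier is still an admissible right-hand side in \eqref{stokes}, which it is since $P$ maps $\bold L^2$ into $\bold H$ and the control enters \eqref{stokes} through $P$ implicitly. Everything else is routine functional analysis, so I would present the proof in two short moves: (1) invoke Remark~\ref{rmq:tecnical} to place $\gamma$ in the geometric setting of Theorem~\ref{obser} and time-reverse; (2) run the convex-duality minimization of $J$ and read off the $\bold L^\infty$-control supported in $\gamma$ from the optimality condition.
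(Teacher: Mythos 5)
Your strategy is exactly the one the paper intends: Theorem~\ref{bang_bang_stokes_0} is stated without a separate proof and is meant to follow, as in Theorem~\ref{control_Linfty}, from the $L^1$-observability inequality \eqref{Obsinequality} applied to $\mathcal M=\gamma$ (after the reduction of Remark~\ref{rmq:tecnical} and a time reversal), through the standard duality that converts an $\Lvec^1$-observation on a measurable set into an $\Lvec^\infty$-control supported in that set. So the two moves you announce are the right ones, and your remarks on time-reversing $\gamma$ and on the control entering as $\vvec\chi_\gamma$ are fine.

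The execution of the variational step, however, would not go through as written. The functional $J(\zvec_T)=\int_\gamma|\zvec|\,d\xvec dt+\langle\uvec_0,\zvec(\cdot,0)\rangle_{\bold H}$ is positively homogeneous of degree one in $\zvec_T$, so its infimum is either $0$ (attained at $\zvec_T=0$, from which the Euler--Lagrange/subdifferential relation yields no control directly) or $-\infty$; there is no nontrivial minimizer to differentiate. Moreover, your coercivity claim on $\bold H$ is false: the (time-reversed) adjoint flow is strongly smoothing, so both $\|\zvec(\cdot,0)\|_{\bold H}$ and $\int_\gamma|\zvec|$ can be exponentially small compared with $\|\zvec_T\|_{\bold H}$, and $J(\zvec_T)$ need not blow up as $\|\zvec_T\|_{\bold H}\to\infty$. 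The classical fix is to minimize instead $\widetilde J(\zvec_T)=\tfrac12\bigl(\int_\gamma|\zvec|\,d\xvec dt\bigr)^2+\langle\uvec_0,\zvec(\cdot,0)\rangle_{\bold H}$, working on the completion of $\bold H$ with respect to the seminorm $\zvec_T\mapsto\int_\gamma|\zvec|$ (a norm by unique continuation), or equivalently to solve penalized problems and pass to the limit; comparing with $\widetilde J(0)=0$ and using \eqref{Obsinequality} gives $\int_\gamma|\hat\zvec|\le 2C_{obs}\|\uvec_0\|_{\bold H}$ for the minimizer, and the optimality condition produces the control $\vvec=\bigl(\int_\gamma|\hat\zvec|\bigr)\,g\,\chi_\gamma$ with $g$ in the subdifferential of the $\Lvec^1$-norm, hence $\|\vvec\|_{\Lvec^\infty(\gamma)}\le 2C_{obs}\|\uvec_0\|_{\bold H}$ and $\uvec(\cdot,T)=\ovec$. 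Alternatively, a Hahn--Banach duality argument applied directly to \eqref{Obsinequality} (as in \cite{AEWZ,W1,WZ1}) avoids the minimization altogether. With one of these standard replacements for your functional, your proof is complete and coincides with the paper's intended argument.
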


\bigskip


\subsection{Time optimal control problem for the Stokes system}
		
Let  $|\cdot|_r: \mathbb{R}^N \rightarrow [0,\infty)$ be the $r$-euclidean norm in $\mathbb{R}^N$, i.e., 
\[
	|\xvec|_r=
	\left \{   
		\begin{array}{lcl}
			(|x_1|^r+\ldots+|x_N|^r)^{1\over r} &  \mbox{if}&  r\in[1,\infty),  \\
			  \noalign{\smallskip}\dis
			\max\{|x_1|,\ldots,|x_N|\} & \mbox{if}& r=\infty,
		\end{array}
	\right.
\]
for every $\xvec \in \mathbb{R}^N$. 

For $r \in [1,\infty]$ fixed and  any $M>0$,  we consider the  {\it set of  admissible controls}
\begin{equation*}
	\mathcal{U}_{ad}^{M,r}=\{ \vvec\in \Lvec^\infty(\om\times[0,\infty))\,;\, | \vvec(\xvec,t)|_r\leq M 
	\hbox{ \,a.e.  in \,} \om\times[0,\infty)\}
\end{equation*}
and for $\uvec_0\in \Hvec$ given, we define the {\it set of reachable states starting from $\uvec_0$}:
\begin{equation*}
	\mathcal{R}(\uvec_0,\mathcal{U}_{ad}^{M,r})=\left\{\uvec(\cdot,\tau)\,;\, \tau>0 \hbox{ and }
	 \uvec\hbox{ is the solution of \eqref{stokes} with }  \vvec\in\mathcal{U}_{ad}^{M,r}\right\}.
\end{equation*}
Thanks to Theorem \ref{control_Linfty}, it follows that $\ovec \in \mathcal{R}(\uvec_0,\mathcal{U}_{ad}^{M,r})$, for any $\uvec_0\in\Hvec$.

	In this section, we study the following time optimal 
	control problem: 
\begin{quote}
\it{given $\uvec_0\in\Hvec$ and $\uvec_f\in \mathcal{R}(\uvec_0,\mathcal{U}_{ad}^{M,r})$, find $ \vvec^\star_r\in\mathcal{U}^{M,r}_{ad}$ 
	such that the corresponding solution $\uvec^\star$ of \eqref{stokes} satisfies}
\begin{equation}\label{timeoptimalproblem0}
	\uvec^\star(\tau^\star_r(\uvec_0,\uvec_f))=\uvec_f,
\end{equation}
where $\tau^\star_r(\uvec_0,\uvec_f)$ is the minimal time needed to steer the initial datum $\uvec_0$ to
	the target $\uvec_f$ with controls in $\mathcal{U}_{ad}^{M,r}$, i.e.
\begin{equation}\label{timeoptimalproblem}
	\tau^\star_r(\uvec_0,\uvec_f)=\min_{ \vvec\in \mathcal{U}_{ad}^{M,r}}\left\{\tau\,;\,\uvec(\cdot,\tau)=\uvec_f\right\}.
\end{equation}
\end{quote} 
	
We have the following result:
\begin{theorem}\label{bang_bang_stokes}
	Let $M>0$ and $r\in [1,\infty]$  be given. For every $\uvec_0\in \Hvec$ and any $\uvec_f\in \mathcal{R}(\uvec_0,\mathcal{U}_{ad}^{M,r})$, the time optimal 
	problem \eqref{timeoptimalproblem} has at least one solution. Moreover, any optimal control $\vvec^\star_r$
	satisfies the bang-bang property:  $|\vvec^\star_r(\xvec,t)|_r=M$ for a.e. $(\xvec ,t)\in \omega\times[0,\tau^\star_r(\uvec_0,\uvec_f)]$.
\end{theorem}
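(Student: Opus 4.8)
The plan is to follow the now-classical scheme for time optimal control, using the $\Lvec^\infty$-null controllability with controls supported in measurable sets (Theorem \ref{bang_bang_stokes_0}) as the source of a quantitative observability estimate, and then deriving existence of an optimal control from a compactness argument and the bang-bang property from the $\Lvec^1$-observability inequality \eqref{Obsinequality}. First I would prove existence: let $\tau^\star=\tau^\star_r(\uvec_0,\uvec_f)$ be the infimum in \eqref{timeoptimalproblem}, which is finite and positive since $\uvec_f\in\mathcal R(\uvec_0,\mathcal U_{ad}^{M,r})$ and, by parabolic smoothing, one cannot reach $\uvec_f$ instantaneously unless $\uvec_0=\uvec_f$. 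Take a minimizing sequence $(\vvec_n)\subset\mathcal U_{ad}^{M,r}$ with associated times $\tau_n\downarrow\tau^\star$ and $\uvec_n(\cdot,\tau_n)=\uvec_f$. Since $\|\vvec_n\|_{\Lvec^\infty(\omega\times(0,\infty))}$ is bounded, up to a subsequence $\vvec_n\rightharpoonup\vvec^\star$ weakly-$\ast$ in $\Lvec^\infty(\omega\times(0,T'))$ for any $T'>\tau^\star$, and the constraint set $\mathcal U_{ad}^{M,r}$ is convex and closed, hence weakly-$\ast$ closed, so $\vvec^\star\in\mathcal U_{ad}^{M,r}$. Continuous dependence of the solution of \eqref{stokes} on the control (the control-to-state map is linear and continuous from $\Lvec^2$ to $C^0([0,T'];\Hvec)$) together with $\tau_n\to\tau^\star$ and a standard equicontinuity argument give $\uvec^\star(\cdot,\tau^\star)=\uvec_f$, so $\vvec^\star$ is an optimal control and the minimum is attained.

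Next I would establish the bang-bang property, which is the heart of the argument. Suppose, for contradiction, that some optimal control $\vvec^\star_r$ does not satisfy $|\vvec^\star_r(\xvec,t)|_r=M$ a.e.\ on $\omega\times[0,\tau^\star]$; then there is a measurable set $\gamma\subset\omega\times[0,\tau^\star]$ of positive measure and $\eta>0$ with $|\vvec^\star_r(\xvec,t)|_r\le M-\eta$ on $\gamma$. The idea is to use this ``slack'' to reach $\uvec_f$ strictly before $\tau^\star$, contradicting minimality. By the Pontryagin maximum principle / the duality between controllability and observability, one translates the reachability of $\uvec_f$ at a time $\tau<\tau^\star$ with controls in $\mathcal U_{ad}^{M,r}$ into an observability-type inequality for the adjoint Stokes system
\[
	-\boldsymbol{\varphi}_t-\Delta\boldsymbol{\varphi}+\nabla\pi=\ovec,\quad \Div\boldsymbol{\varphi}=0\ \text{in}\ Q,\quad \boldsymbol{\varphi}=\ovec\ \text{on}\ \Sigma,
\]
run backward from terminal data at time $\tau$. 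Concretely, using Theorem \ref{bang_bang_stokes_0} applied on the set $\gamma$, for any $\tau$ slightly below $\tau^\star$ the system \eqref{stokes} is $\Lvec^\infty$-null controllable from $\Omega$ at time $\tau$ with controls supported in $\gamma\cap(\Omega\times[0,\tau])$, and the corresponding $\Lvec^1$-observability inequality \eqref{Obsinequality} — more precisely its analogue with observation region $\gamma$, which holds since $|\gamma\cap(\Omega\times[0,\tau])|>0$ for $\tau$ close to $\tau^\star$ — yields a control $\widetilde\vvec$ transferring $\uvec(\cdot,\tau;\vvec^\star_r)$ to $\ovec$ with $\|\widetilde\vvec\|_{\Lvec^\infty}\le C\big(\|\uvec_0\|_{\Hvec},M\big)$ and, crucially, supported in $\gamma$.

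The construction of the competitor control then goes as follows: on $[0,\tau]$ keep $\vvec^\star_r$; at time $\tau<\tau^\star$ define on $[\tau,\tau^\star]$ a new control that steers $\uvec(\cdot,\tau;\vvec^\star_r)$ to $\uvec_f$ — this is possible because $\uvec_f$ itself is reached by the optimal trajectory at $\tau^\star$, so writing $\uvec(\cdot,\tau^\star;\vvec^\star_r)=\uvec_f$ and using linearity, the difference $\uvec(\cdot,\tau;\vvec^\star_r)-\uvec(\cdot,\tau;\ovec\text{-extension})$ must be driven to zero by a correction control that, by the above observability estimate on $\gamma$ with a short time horizon, has $\Lvec^\infty$-norm controlled by the size of that difference, which in turn is $O(\tau^\star-\tau)$ by the regularity of the trajectory. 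Hence the correction is added only on $\gamma$ where $|\vvec^\star_r|_r\le M-\eta$, and for $\tau^\star-\tau$ small enough the sum still lies in $\mathcal U_{ad}^{M,r}$; this produces an admissible control reaching $\uvec_f$ at time $\tau<\tau^\star$, the desired contradiction. Therefore $|\vvec^\star_r(\xvec,t)|_r=M$ a.e.\ on $\omega\times[0,\tau^\star]$.

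The main obstacle I anticipate is precisely this last quantitative gluing step: one needs the $\Lvec^\infty$-controllability cost on the sliced set $\gamma\cap(\Omega\times[\tau,\tau^\star])$ (of small measure, vanishing as $\tau\uparrow\tau^\star$) to grow no faster than a negative power or at worst polynomially in $(\tau^\star-\tau)^{-1}$, while the target difference decays at least linearly in $(\tau^\star-\tau)$; balancing these requires a careful bookkeeping of how the observability constant $C_{obs}=C(N,R,\Omega,\mathcal M,T)$ in Theorem \ref{obser} depends on $\mathcal M=\gamma$ and on the time window, which is the genuinely technical part. The existence part and the abstract duality are, by contrast, routine once Theorem \ref{bang_bang_stokes_0} is in hand.
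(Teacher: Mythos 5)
Your existence argument is essentially the paper's (minimizing sequence, weak-$\star$ compactness of $\mathcal{U}_{ad}^{M,r}$, passage to the limit in the state), so that part is fine. The genuine gap is in the bang-bang step, exactly where you flag the ``main obstacle'': it is not a matter of careful bookkeeping, the scheme itself fails. You place the correction control on $\gamma\cap(\Omega\times[\tau,\tau^\star])$ over the shrinking window $[\tau,\tau^\star]$. First, nothing prevents the slack set $\gamma$ from lying entirely in, say, $\om\times(0,\tau^\star/2)$, in which case $\gamma\cap(\Omega\times[\tau,\tau^\star])$ has zero measure for all $\tau$ close to $\tau^\star$ and there is no admissible region for the correction at all. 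Second, even when this set has positive measure, the $\Lvec^\infty$-controllability cost over a horizon of length $\tau^\star-\tau$ grows like $e^{C/(\tau^\star-\tau)}$ (cf.\ the remark following Theorem~\ref{obser}), whereas the state discrepancy you need to correct decays at best like $O(\tau^\star-\tau)$ (and in fact only $o(1)$ for data merely in $\Hvec$); no balancing of a linear decay against an exponentially blowing-up cost can close the argument. The invocation of the Pontryagin principle is also idle: neither you nor the paper actually uses it.

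The paper resolves this with a time-translation trick that is absent from your proposal: one does not correct at the end, but shifts the optimal control backward in time. Fix $\delta_0>0$ so small that $\tau_0=\tau^\star-\delta_0>0$ and $\Gamma=\gamma\cap(\om\times(0,\tau_0))$ still has positive measure; then for $\delta\in(0,\delta_0)$ consider the competitor $\widehat\vvec(\xvec,t)=\vvec^\star(\xvec,t+\delta)+\vvec(\xvec,t)$ on $[0,\tau_0]$, where $\vvec$ is the $\Lvec^\infty$ control given by Theorem~\ref{bang_bang_stokes_0}, supported in the \emph{fixed} set $\Gamma$, steering the datum $\uvec_0-\uvec^\star(\cdot,\delta)$ to $\ovec$ at the \emph{fixed} time $\tau_0$, with $\|\vvec\|_{\Lvec^\infty}\le C_{obs}(\tau_0,\Gamma)\,\|\uvec_0-\uvec^\star(\cdot,\delta)\|_{\Hvec}$. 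Here the observability constant is fixed, and the smallness is produced by the continuity of $t\mapsto\uvec^\star(\cdot,t)$ at $t=0$: choosing $\delta$ with $\|\uvec_0-\uvec^\star(\cdot,\delta)\|_{\Hvec}\le\eps/C_{obs}(\tau_0,\Gamma)$ gives $\|\vvec\|_{\Lvec^\infty}\le\eps$, so the $\eps$-slack on $\gamma$ keeps $\widehat\vvec$ admissible, and by linearity the associated state starts from $\uvec_0$ and reaches $\uvec_f$ at time $\tau^\star-\delta<\tau^\star$, the desired contradiction. In short, the smallness must come from shifting the optimal trajectory near $t=0$ and correcting over a fixed window with fixed cost, not from shrinking the control window at the final time, where the cost explodes; without this idea your contradiction argument cannot be completed.
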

\begin{proof}
	
Since $\uvec_f\in \mathcal{R}(\uvec_0,\mathcal{U}_{ad}^{M,r})$, there exists a minimizing sequence
	$(\tau_n, \vvec_n)_{n\geq1}$ such that $ \tau_n \xrightarrow[ n \to \infty ]{}\tau^\star_r(\uvec_0, \uvec_f)$ 
	and $(\vvec_n)_{n\geq1}\subset \mathcal{U}_{ad}^{M,r}$ has the property that the associated solution $\uvec_n$ to \eqref{stokes}
	satisfies $\uvec_n(\cdot,\tau_n)= \uvec_f$ for all $n\geq1$. Also, because $( \vvec_n)_{n\geq1}\subset \mathcal{U}_{ad}^{M,r}$, it follows that $( \vvec_n)_{n\geq1}$ converges 
	weakly-$\star$ to some vector-function $ \vvec^\star\in  \mathcal{U}_{ad}^{M,r}$ in 
	$\Lvec^\infty(\om\times(0,\tau^\star_r(\uvec_0, \uvec_f)))$. 
	
	\null
	
\begin{claim}
$\vvec^\star$ is a solution of the time optimal problem \eqref{timeoptimalproblem0}.
\end{claim} 
\begin{proof}[Proof of the Claim.]
We only have to show that $ \uvec^\star(\cdot,
\tau^\star_r(\uvec_0, \uvec_f) )=\uvec_f$, where  $\uvec^\star$ is the solution of \eqref{stokes} associated
	 to $ \vvec^\star$. 

	 
	 To show this, let $\bar \uvec$ be the solution of \eqref{stokes} with $\vvec\equiv\ovec$
	 and $\wvec=\uvec^\star-\bar \uvec$, $\wvec_n=\uvec_n-\bar\uvec$ solutions of 
\[
\left |   
\begin{array}{lcl}
\wvec_t - \Delta \wvec  +\nabla \pi =  \vvec^\star1_\om &  \mbox{in}&  Q,  \\
\Div \wvec  = 0 &  \mbox{in}&   Q,  \\
\wvec = \mathbf{0} & \mbox{on}& \Sigma, \\
\wvec_n(0) = \mathbf{0} & \mbox{in}& \Omega,
\end{array}
\right. 
\]
and
\[
\left |   
\begin{array}{lcl}
\wvec_{n,t} - \Delta \wvec_n  +\nabla \pi_n =  \vvec_n1_\om &  \mbox{in}&  Q,  \\
\Div \wvec_n  = 0 &  \mbox{in}&   Q,  \\
\wvec_n = \mathbf{0} & \mbox{on}& \Sigma, \\
\wvec_n(0) = \mathbf{0} & \mbox{in}& \Omega,
\end{array}
\right. 
\]
	respectively.
		
	Now, thanks to the continuity in time of $\bar\uvec$ and that  $\tau_n\xrightarrow[ n \to \infty ]{}\tau^\star(\uvec_0,\uvec_f)$, it follows that 
	$\bar\uvec(\cdot,\tau_n)\xrightarrow[ n \to \infty ]{} \bar\uvec(\cdot,\tau^\star_r(\uvec_0, \uvec_f) )$ in $\Hvec$.  Moreover, it is not difficult to see that
$$
	\langle \wvec_n(\tau_n)-\wvec_n(\tau^\star_r(\uvec_0, \uvec_f)),\varphi \rangle\to 0 \quad \forall \varphi\in \Hvec,
$$
$$
	\langle \wvec_n(\tau^\star_r(\uvec_0, \uvec_f)),\varphi \rangle\to 
	\langle \wvec(\tau^\star_r(\uvec_0, \uvec_f)),\varphi \rangle \quad \forall \varphi\in \Hvec
$$
	and 
$$
	\langle  \wvec_n(\tau_n),\varphi \rangle\to 
	\langle \wvec(\tau^\star_r(\uvec_0, \uvec_f)),\varphi \rangle \quad \forall \varphi\in \Hvec.
$$

\

	Since $\uvec_f=\bar\uvec(\cdot,\tau_n)+\wvec_n(\cdot,\tau_n)$,
	we have that $\langle \uvec_f,\varphi \rangle=\langle \bar\uvec(\cdot,\tau_n)+\wvec_n(\cdot,\tau_n),\varphi \rangle$ for all
	$\varphi\in \Hvec$
	 and
$
	\langle \uvec_f,\varphi \rangle=\langle \bar\uvec(\cdot,\tau^\star_r(\uvec_0, \uvec_f))
	+\wvec(\cdot,\tau^\star_r(\uvec_0, \uvec_f)),\varphi \rangle
		=\langle \uvec^\star(\cdot,
\tau^\star_r(\uvec_0, \uvec_f) ),\varphi \rangle$, for all $ \varphi\in \Hvec$.
\end{proof}

\

	Now, let us show that any optimal control $\vvec^\star\in \mathcal{U}_{ad}^{M,r}$
	satisfies the bang-bang property. To do this, we argue by contradiction.
	
	We consider $\uvec^\star$ the corresponding state (with some pressure) to \eqref{stokes} and suppose that there exist $\eps>0$ and a measurable 
	set of positive measure $\gamma\subset\om\times(0,\tau^\star_r(\uvec_0, \uvec_f))$ such that
\begin{equation}\label{eq:contradic}
	|\vvec^\star(\xvec,t)|_r<M-\eps \quad ((\xvec,t)\in \gamma).
\end{equation}

Choosing $\delta_0>0$ small enough such that
\[
\left \{   
\begin{array}{l}
	\tau_0=\tau^\star_r(\uvec_0, \uvec_f)-\delta_0>0,\\
	\hbox{the set } \Gamma=\{(\xvec,t)\in \om\times(0,\tau_0)\,:\, 
	(\xvec,t)\in \gamma \}
	\hbox{ has positive measure},
\end{array}
\right. 
\]
and using the time continuity of  $\uvec^\star$,  there exists $\delta\in (0,\delta_0)$
	such that 
\begin{equation}\label{eq:sol}
	\|\uvec_0-\uvec^\star(\cdot,\delta)\|_\Hvec\leq {\eps\over C_{obs}(\tau_0,\Gamma)},
\end{equation}
	where  $C_{obs}(\tau_0,\Gamma)$ is the observability constant given by Theorem \ref{obser} for the control domain $\Gamma$ 
	at time $\tau_0$. 

	From Theorem \ref{bang_bang_stokes_0},  there exists a control 
	$ \vvec\in \Lvec^\infty(\om\times (0,\tau_0))$ with
\[
\left \{   
\begin{array}{l}
	\text{ supp}\,\vvec\subset \Gamma,\\
	\hbox{the associated solution }  \uvec\hbox{ satisfies } 
	\uvec(\cdot,0)=\uvec_0-\uvec^\star(\cdot,\delta) \hbox{ and } \uvec(\cdot,\tau_0)=\mathbf{0},\\
	 \| \vvec\|_{\Lvec^\infty (\Gamma)}\leq C_{obs}(\tau_0,\Gamma)\|\uvec_0-\uvec^\star(\delta)\|_\Hvec.
\end{array}
\right. 
\]

	Thus, from \eqref{eq:sol} we have that
\[
	 \| \vvec\|_{\Lvec^\infty (\om\times(0,\tau_0))}\leq \eps.
\]

	Now, let $\widehat  \vvec\in \Lvec^\infty (\om\times(0,\tau_0))$ be defined by
$$
	\widehat  \vvec(x,t)= \vvec^\star(\xvec,t+\delta) + \vvec(\xvec,t) \quad (t\in [0,\tau_0]).
$$

	Noticing that $\tau_0+\delta\leq \tau^\star_r(\uvec_0, \uvec_f)$, using the fact that
	$\text{supp}\,\vvec \subset\Gamma$ and estimate \eqref{eq:contradic}, it follows that $\widehat \vvec\in\mathcal{U}_{ad}^{M,r}$.

	Finally, setting $\widehat \uvec(\xvec,t)=\uvec^\star(\xvec,t+\delta)+\uvec(\xvec,t)$
	and $\widehat p(\xvec,t)=p^\star(\xvec,t+\delta)+p(\xvec,t)$,  we have that $\widehat \uvec(\cdot,0)=\uvec_0$,
	$\widehat \uvec(\tau^\star_r(\uvec_0, \uvec_f)-\delta)=\uvec_f$ and that
\[
\widehat\uvec_t - \Delta \widehat \uvec +\nabla\widehat p = \widehat \vvec1_\om.
\]

	Hence, $\widehat \vvec\in \mathcal{U}_{ad}^{M,r}$ is a control which steers $\uvec_0$ to $\uvec_f$ at time
	$\tau^\star_r(\uvec_0, \uvec_f)-\delta$. This contradicts the definition of 
	$\tau^\star_r(\uvec_0, \uvec_f)$ and then the bang-bang property holds.
\end{proof}

	About the uniqueness of the optimal control for problem \eqref{timeoptimalproblem}, we have the following result:
\begin{proposition}
	Let $M>0$ and $r\in(1,\infty)$. For any $\uvec_0\in \Hvec$ and every $\uvec_f\in \mathcal{R}(\uvec_0,\mathcal{U}_{ad}^{M,r})$, the time optimal control
	problem \eqref{timeoptimalproblem0}-\eqref{timeoptimalproblem}  has a unique solution $ \vvec^\star_r$ 
	which satisfies a bang-bang property: $|\vvec^\star_r(\xvec,t)|_r=M$ for a.e. $(\xvec ,t)\in \omega\times[0,\tau^\star_r(\uvec_0,\uvec_f)]$.
\end{proposition}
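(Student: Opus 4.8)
The plan is to deduce uniqueness from the linearity of the Stokes system together with the strict convexity of the $\ell^r$-norm for $r\in(1,\infty)$, taking Theorem~\ref{bang_bang_stokes} as a black box: existence of a time optimal control and the bang-bang property for \emph{every} such control are already granted there, so only uniqueness (and the attendant bang-bang statement, which is immediate from Theorem~\ref{bang_bang_stokes}) remains to be shown.

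First I would establish that the set of time optimal controls is convex. Suppose $\vvec^\star_1,\vvec^\star_2\in\mathcal{U}_{ad}^{M,r}$ are both time optimal for $(\uvec_0,\uvec_f)$, so that the corresponding solutions $\uvec_1,\uvec_2$ of \eqref{stokes} with initial datum $\uvec_0$ satisfy $\uvec_i(\cdot,\tau^\star)=\uvec_f$, where $\tau^\star=\tau^\star_r(\uvec_0,\uvec_f)$. Put $\vvec^\star=\tfrac12(\vvec^\star_1+\vvec^\star_2)$. By the triangle inequality, $|\vvec^\star(\xvec,t)|_r\le\tfrac12|\vvec^\star_1(\xvec,t)|_r+\tfrac12|\vvec^\star_2(\xvec,t)|_r\le M$ a.e. in $\om\times[0,\tau^\star]$, so $\vvec^\star\in\mathcal{U}_{ad}^{M,r}$. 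Moreover, by the affine-linear dependence of the solution of \eqref{stokes} on the pair (initial datum, control), the solution associated with $(\uvec_0,\vvec^\star)$ is $\tfrac12(\uvec_1+\uvec_2)$, whose value at $t=\tau^\star$ is $\tfrac12(\uvec_f+\uvec_f)=\uvec_f$. Since $\tau^\star$ is by definition the minimal transfer time, $\vvec^\star$ is itself a time optimal control.

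Next I would apply the bang-bang property of Theorem~\ref{bang_bang_stokes} to the three optimal controls $\vvec^\star_1$, $\vvec^\star_2$ and $\vvec^\star$: for a.e. $(\xvec,t)\in\om\times[0,\tau^\star]$,
$$|\vvec^\star_1(\xvec,t)|_r=|\vvec^\star_2(\xvec,t)|_r=\left|\tfrac12\bigl(\vvec^\star_1(\xvec,t)+\vvec^\star_2(\xvec,t)\bigr)\right|_r=M.$$
Hence equality is attained in the triangle inequality $|a+b|_r\le|a|_r+|b|_r$ with $a=\vvec^\star_1(\xvec,t)$, $b=\vvec^\star_2(\xvec,t)$ and $|a|_r=|b|_r=M>0$. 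Because $|\cdot|_r$ is strictly convex on $\mathbb R^N$ for $r\in(1,\infty)$, this equality forces $a$ and $b$ to be positively proportional, and then $|a|_r=|b|_r$ yields $a=b$. Therefore $\vvec^\star_1=\vvec^\star_2$ a.e. in $\om\times[0,\tau^\star]$, which gives uniqueness.

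I expect the only delicate point to be the justification that the midpoint control $\vvec^\star$ is again optimal — hence that Theorem~\ref{bang_bang_stokes} may be invoked for it — which rests precisely on the linearity of \eqref{stokes} and on the minimality characterization of $\tau^\star$. I would also remark that the restriction $r\in(1,\infty)$ is essential: the $\ell^1$- and $\ell^\infty$-norms are not strictly convex, so the proportionality conclusion, and with it the uniqueness, can fail in those cases (while the existence and bang-bang statements of Theorem~\ref{bang_bang_stokes} still hold).
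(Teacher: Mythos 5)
Your proposal is correct and follows essentially the same route as the paper's proof: form the midpoint control, use linearity and minimality of $\tau^\star_r$ to see it is again time optimal, invoke the bang-bang property of Theorem~\ref{bang_bang_stokes} for all three controls, and conclude from the strict (uniform) convexity of $|\cdot|_r$ for $r\in(1,\infty)$ that the two controls coincide a.e. Your write-up is in fact slightly more explicit than the paper's, since you spell out why the averaged control is admissible and optimal before applying the bang-bang property.
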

\begin{proof}
	The existence of solution and the bang-bang property is a consequence of Theorem \ref{bang_bang_stokes}. 
	We only have to prove the uniqueness of solution. Thus, let $ \vvec$ and $\hvec$ be two time optimal controls in $\mathcal{U}^{M,r}_{ad}$.
	Thanks to the linearity, $\wvec={1\over2}( \vvec+\hvec)$ is also a time optimal control. From
	Theorem \ref{bang_bang_stokes}, $\wvec$ also satisfies the bang-bang property. Therefore, we have that $|\vvec(\xvec,t)|_r=|\hvec(\xvec,t)|_r=|\wvec(\xvec,t)|_r=M$, a.e.
 	in $\om\times(0,\tau^\star_r(\uvec_0,\uvec_1))$. Now, if $\vvec(\xvec,t)\neq\hvec(\xvec,t)$ 
	in a measurable set of positive measure $\mathcal{D}\subset \om\times (0,\tau^\star_r(\uvec_0,\uvec_1))$,
	then, thanks to the fact that any norm $|\cdot|_r$ for $r\in(1,\infty)$ is {\it uniformly convex} in $\mathbb{R}^N$, 
	we have that $|\wvec(\xvec,t)|_r<M$ a.e. in  $\mathcal{D}\subset \om \times (0,\tau^\star_r(\uvec_0,\uvec_1))$. 
	This contradicts the bang-bang property for $\wvec$.
\end{proof}


\appendix


\section{Real-analytic estimates for solutions to the Poisson equation}\label{appendix}

	In this appendix we prove the following lemma which was used in the proof of Theorem \ref{4241}.
\begin{lemma}\label{analyticityDelta}
	Assume that $f$ is an real-analytic function in $B_{R}(\xvec_0)$ verifying
\begin{equation}\label{423-6}
	|\partial_\xvec^{\alpha}f(\xvec)| \leq {M|\alpha|!\over (R\rho_0)^{|\alpha|}}
	\;\;\;\;\text{for all}\;\; \xvec\in B_R(\xvec_0)\;\;\text{and}\;\;\alpha \in \mathbb{N}^N,
\end{equation}
	with some positive constants $M$ and $\rho_0$. Let $u\in L^2(B_R(\xvec_0))$ satisfying 
	the Poisson equation
\begin{equation}\label{Poisson}
			- \Delta u  = f\;\;\text{in}\;\;B_R(\xvec_0).
\end{equation}
	Then, $u$ is real-analytic in  $B_{R/2}(\xvec_0)$ and has the estimate
\begin{align}\label{estA0}
	\| \partial_\xvec^{\alpha} u\|_{L^{\infty}(B_{R/2}(\xvec_0))}
	\leq  {|\alpha|!\over (R\tilde\rho)^{|\alpha|+1}} \bigl(\|u\|_{L^2(B_R(\xvec_0))} + M \bigl),
	\;\;\text{for all}\;\;\alpha\in \mathbb N^N,
\end{align}
	 where $\tilde\rho$ is a  constant depending only on the dimension $N$ and $\rho_0$.
\end{lemma}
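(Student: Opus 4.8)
The plan is to reduce, by a translation and a dilation, to the case $\xvec_0=0$ and $R=1$ (which only produces harmless powers of $R$), and then to prove the estimate by a bootstrap on the order of differentiation, using nothing beyond the elementary interior estimate for the Poisson equation together with \eqref{423-6}. Since $f$ is in particular bounded on $B_1$ by $M$ (take $\alpha=0$ in \eqref{423-6}), classical interior regularity for $-\Delta u=f$ (for instance $W^{2,p}$ estimates and Sobolev embedding, bootstrapped using that \emph{all} derivatives of $f$ are bounded) shows $u\in C^\infty(B_1)$ and gives the starting bound
\[
\|u\|_{L^\infty(B_{15/16})}\le C_N\bigl(\|u\|_{L^2(B_1)}+M\bigr).
\]
The only tool used afterwards is the first-order interior estimate: if $-\Delta w=g$ in $B_{r'}$ and $0<r<r'$, then
\[
\|\nabla w\|_{L^\infty(B_r)}\le \frac{C_N}{r'-r}\,\|w\|_{L^\infty(B_{r'})}+C_N\,(r'-r)\,\|g\|_{L^\infty(B_{r'})},
\]
which one obtains by splitting $w$, on a ball $B_{r'-r}(x)$ with $x\in B_r$, into its harmonic part (interior gradient estimate for harmonic functions) plus the Newtonian potential of $g$ restricted to that ball (interior gradient bound for the potential of a bounded function). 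Observe that here the source $g$ carries the \emph{positive} power $(r'-r)$; this is what will prevent the factorially growing bounds on $\partial^\gamma f$ from accumulating.

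Now I run the bootstrap. Fix $\alpha$ with $|\alpha|=k\ge1$ and write $\alpha=e_{i_1}+\cdots+e_{i_k}$. Choose the $k+1$ equidistant radii $r_j=\tfrac{15}{16}-\tfrac{7j}{16k}$, $j=0,\dots,k$, so $r_0=\tfrac{15}{16}$, $r_k=\tfrac12$, and $r_{j-1}-r_j=\tfrac{7}{16k}$ for every $j$. With $u^{(0)}=u$, $u^{(j)}=\partial_{i_j}u^{(j-1)}$ and, likewise, $f^{(j)}=\partial_{i_j}f^{(j-1)}$, one has $-\Delta u^{(j)}=f^{(j)}$ and, by \eqref{423-6}, $\|f^{(j)}\|_{L^\infty(B_1)}\le M\,j!\,\rho_0^{-j}$. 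Applying the first-order estimate to $u^{(j-1)}$ on the pair $B_{r_j}\subset B_{r_{j-1}}$ for $j=1,\dots,k$ and iterating gives, with $c_N:=16C_N/7$,
\[
\|\partial^\alpha u\|_{L^\infty(B_{1/2})}\le (c_N k)^k\,\|u\|_{L^\infty(B_{15/16})}+\frac{7C_N M}{16k}\sum_{j=1}^{k}(c_N k)^{\,k-j}\,\frac{(j-1)!}{\rho_0^{\,j-1}}.
\]
The decisive point is that the radii are taken $k$-many and equidistant: then $\prod_{j}(r_{j-1}-r_j)^{-1}=(16k/7)^k$, and, by Stirling ($k^k\le e^k k!$), the first term is at most $(c_N e)^k k!\,\|u\|_{L^\infty(B_{15/16})}$; a direct estimate of the second term (using $i!\,(k-1-i)!\le (k-1)!$, summing the resulting exponential series, and $\rho_0\le1$) bounds it by $(C'/\rho_0)^k k!\,M$ with $C'=C'(N)$. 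Combining with the starting bound yields $\|\partial^\alpha u\|_{L^\infty(B_{1/2})}\le (K/\rho_0)^{|\alpha|}\,|\alpha|!\,\bigl(\|u\|_{L^2(B_1)}+M\bigr)$ with $K=K(N)$, which is \eqref{estA0} for $R=1$ once we set $\tilde\rho=\tilde\rho(N,\rho_0):=\min\{1,\rho_0/K\}$; undoing the dilation gives \eqref{estA0}.

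I expect the main obstacle to be precisely this bookkeeping of constants. A naive iteration along a fixed chain of radii produces $(|\alpha|!)^2$ (Gevrey-$2$) growth; to land in the analytic class one must (i) use, for each target order $k$, a \emph{fresh} chain of $k$ equidistant radii, so that $\prod_{j}(r_{j-1}-r_j)^{-1}=(16k/7)^k$ is comparable to $(\mathrm{const})^k\,k!$ instead of $(k!)^2$; (ii) use that the source enters the first-order estimate multiplied by the gap $(r'-r)$, so that the bound $\|\partial^\gamma f\|_{L^\infty}\le M\,|\gamma|!\,\rho_0^{-|\gamma|}$ does not accumulate; and (iii) track the rescaling so that the powers of $R$ come out as in \eqref{estA0}. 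When $f\equiv0$ --- the case actually used in the proof of Theorem~\ref{4241} --- only the harmonic part survives and the statement reduces to the classical Cauchy estimates for harmonic functions.
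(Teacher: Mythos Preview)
Your argument is correct, but it follows a genuinely different route from the paper's. After the same rescaling to $R=1$, the paper uses the Morrey--Nirenberg weighted energy method: it multiplies the differentiated equation $-\Delta\partial^\alpha u=\partial^\alpha f$ by $(1-|\xvec|^2)^{2(|\alpha|+1)}\partial^\alpha u$, integrates by parts over $B_1(\ovec)$, and obtains the one-step recursion
\[
\big\|(1-|\xvec|^2)^{|\alpha|+1}\nabla\partial^\alpha u\big\|_{L^2(B_1)}
\le 5\Big[(|\alpha|+1)\,\big\|(1-|\xvec|^2)^{|\alpha|}\partial^\alpha u\big\|_{L^2(B_1)}+\tfrac{M|\alpha|!}{\rho_0^{|\alpha|}}\Big],
\]
from which an induction yields $\|(1-|\xvec|^2)^{|\alpha|}\partial^\alpha u\|_{L^2(B_1)}\le \rho^{-|\alpha|-1}|\alpha|!\,(\|u\|_{L^2(B_1)}+M)$; the passage to $L^\infty$ on $B_{1/2}$ is then declared routine. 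Thus the paper works entirely in weighted $L^2$, with the single weight $(1-|\xvec|^2)^{|\alpha|}$ replacing your whole family of nested balls, and the factor $|\alpha|+1$ in the recursion producing the factorial directly. Your approach instead stays in $L^\infty$ throughout, iterating a first-order gradient estimate along $k$ equidistant radii chosen afresh for each order $k=|\alpha|$, so that the accumulated loss $(C_N/\delta)^k$ with $\delta\sim 1/k$ becomes $(C_Ne)^k k!$ via Stirling. Both are classical strategies for interior analytic regularity; the paper's is closer to the Morrey--Nirenberg reference it cites and needs only one integration by parts per order, while yours is more explicit about the mechanism preventing the factorial bounds on $\partial^\gamma f$ from over-accumulating (namely, the positive power of the gap multiplying the source term in your first-order estimate).
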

	A proof of the lemma \ref{analyticityDelta} for $f\equiv 0$ can be found in \cite{MorreyNirenberg}. 
	For the sake of completeness, we give a proof for the non-homogeneous case. 
\begin{proof}
	By rescaling, it suffices to prove the estimate \eqref{estA0} when $R=1$ and $\xvec_0=\ovec$.
	
	Since $f$ is real-analytic in $B_1(\ovec)$, by the interior regularity for solutions of elliptic equations,  
	we have that $u$ is smooth in  $B_{1}(\ovec)$.  Hence, we have that 
$$
	-\Delta \partial_\xvec^{\alpha} u(\xvec)= \partial_\xvec^{\alpha} f(\xvec)\;\;\;\text{for all}\;\;\xvec\in B_1(\ovec),
$$
	for every $\alpha =(\alpha_1, \ldots, \alpha_N)\in\mathbb N^N$.
 
	Multiplying the above equation by $(1-|\xvec|^2)^{2(|\alpha|+1)}\partial_\xvec^{\alpha}u$ gives 
\begin{equation}\label{eq:alpha_mult}
	-(1-|\xvec|^2)^{2(|\alpha|+1)}\partial_\xvec^{\alpha} u (\xvec)\Delta \partial_\xvec^{\alpha}u(\xvec)
	=(1-|\xvec|^2)^{2(|\alpha|+1)}\partial_\xvec^{\alpha} u(\xvec) \partial_\xvec^{\alpha} f(\xvec),
	\;\;\;\;\forall\,\xvec\in B_1(\ovec),
\end{equation}
	and integration by parts gives
\[
\begin{alignedat}{2}
	\iint_{B_1(\ovec)} (1-|\xvec|^2)^{2(|\alpha|+1)}| \nabla \partial_\xvec^{\alpha}u|^2\,d\xvec
	=&~4(|\alpha|+1)\iint_{B_1(\ovec)}  (1-|\xvec|^2)^{2|\alpha|+1}  
	(\nabla \partial_\xvec^{\alpha} u\cdot \xvec)\partial_\xvec^{\alpha} u\, d\xvec\\
	&+\iint_{B_1(\ovec)} (1-|\xvec|^2)^{2(|\alpha|+1)}\partial_\xvec^{\alpha}u\partial_\xvec^{\alpha}f\,d\xvec.
\end{alignedat}
\]
	
	Now, thanks to the Young's inequality, we have the following estimate
\[
\begin{alignedat}{2}
	\iint_{B_1(\ovec)} (1-|\xvec|^2)^{2(|\alpha|+1)}| \nabla \partial_\xvec^{\alpha} u|^2d\xvec 
	\leq&~[16(|\alpha|+1)^2+1] \iint_{B_1(\ovec)}(1-|\xvec|^2)^{2|\alpha|}|\partial_\xvec^{\alpha}u|^2\,d\xvec\\
	&+\iint_{B_1(\ovec)}|\partial_\xvec^{\alpha}f|^2\,d\xvec.\\
\end{alignedat}
\]

	Since $f$ satisfies \eqref{423-6}, we get
\[
\begin{alignedat}{2}
 	\iint_{B_1(\ovec)} (1-|\xvec|^2)^{2(|\alpha|+1)}|\nabla \partial_\xvec^{\alpha} u|^2\,d\xvec
	\leq&~17(|\alpha|+1)^2\iint_{B_1(\ovec)}  (1-|\xvec|^2)^{2|\alpha|}|\partial_\xvec^{\alpha} u|^2\, d\xvec\\
	&+|B_1(0)|\left|M|\alpha|!\over\rho_0^{|\alpha|}\right|^2.
\end{alignedat}
\]
	Therefore, we obtain
\begin{equation}\label{estA1}
	\left\|(1-|\xvec|^2)^{|\alpha|+1} \nabla \partial_\xvec^{\alpha} u\right\|_{L^2(B_1(\ovec))} 
	\leq 5\left[(|\alpha|+1)\left\|(1-|\xvec|^2)^{|\alpha|}\partial_\xvec^{\alpha} u\right\|_{L^2(B_1(\ovec))}
	 +{M|\alpha|!\over\rho_0^{|\alpha|}} \right],
\end{equation}
	for every $\alpha =(\alpha_1, \ldots, \alpha_N)\in \mathbb{N}^N$. In particular, taking $\alpha =(0, \ldots, 0)$, 
	we deduce the estimate 
\[
	\left\|(1-|\xvec|^2) \nabla u\right\|_{L^2(B_1(\ovec))}\leq 5\left(\|u\|_{L^2(B_1(\ovec))} + M \right).
\]

%

	By induction, we have the inequality
\begin{align}\label{claim}
	\big\|(1-|\xvec|^2)^{|\alpha|} \partial_\xvec^{\alpha} u\big\|_{L^2(B_1(\ovec))}
	\leq \rho^{-|\alpha| -1}|\alpha|! \left(\|u\|_{L^2(B_1(\ovec))} + M \right),
\end{align}
	for some constant $0 <\rho < \min \left\{ \rho_0, {1\over6}\right\}$ and every $\alpha =(\alpha_1, \ldots, \alpha_N)\in \mathbb{N}^N$.
	
	It is not difficult to see that estimate \eqref{claim} leads to \eqref{estA0}.

\end{proof}

\bigskip

 \noindent\textbf{Acknowledgements}.
The authors would like to appreciate Prof. E. Tr\'elat and Prof. G. Lebeau  for the stimulating conversations during 
this work.

\end{document}